\pgfplotsset{compat=1.8}
\def\revddots{\mathinner{\mkern1mu\raise\p@
\vbox{\kern7\p@\hbox{.}}\mkern2mu
\raise4\p@\hbox{.}\mkern1mu\raise7\p@\hbox{.}\mkern1mu}}
\theoremstyle{plain}
\newtheorem{thm}{Theorem}[section]
\newtheorem{lem}[thm]{Lemma}
\newtheorem{prop}[thm]{Proposition}
\newtheorem{cor}[thm]{Corollary}
\theoremstyle{definition}
\newtheorem*{defn}{Definition}
\newtheorem{rem}[thm]{Remark}
\theoremstyle{remark}
\newcommand{\cind}{{\textnormal{c-ind}}}
\newcommand{\Hom}{{\textnormal{Hom}}}
\newcommand{\End}{{\textnormal{End}}}
\newcommand{\Res}{{\textnormal{Res}}}
\newcommand{\F}{F}
\newcommand{\GL}{\textnormal{GL}}
\newcommand{\A}{\mathcal{A}}
\renewcommand{\H}{\mathcal{H}}
\newcommand{\SO}{\textnormal{SO}}
\title{The Gelfand--Graev representation of $\SO(2n+1)$\\
in terms of Hecke algebras.\thanks{MSC2020: 11F70, 22E50, 20C08 \newline Keywords: Hecke algebras, Gelfand--Graev representation}}
\author{Petar Baki\'c, Gordan Savin}
\date{}
\begin{document}

\maketitle

\begin{abstract}
Let $G$ be a $p$-adic classical group. The representations in a given Bernstein component can be viewed as modules for the corresponding Hecke algebra---the endomorphism algebra of a pro-generator of the given component. Using Heiermann's construction of these algebras, we describe the Bernstein components of the Gelfand--Graev representation for $G=\SO(2n+1)$. 
\end{abstract}

\section{Introduction}
 Let $\F$ be a non-Archimedean local field of residue characteristic $q$. Let $G$ be the group of $F$-points of a connected, split reductive algebraic group defined over $\F$; in particular, the group $G$ contains a Borel subgroup. Let 
 $U$ be the unipotent radical of the Borel subgroup and fix a non-degenerate (Whittaker) character $\psi:  U \rightarrow \mathbb C^{\times}$. The Gelfand--Graev representation of $G$ is $\cind_U^G (\psi)$ where $\cind$ stands for induction with compact support.  The goal of this paper is to give an explicit description of the Bernstein components of the Gelfand--Graev representation. 
 
 \vskip 5pt 
 Let us briefly describe what is known. Let $K$ be a special maximal compact subgroup of $G$ and  $I$ an Iwahori subgroup contained in $K$. 
Let $\mathcal H$ be the Iwahori--Hecke algebra of $I$-biinvariant functions on $G$, and $\mathcal H_K$ the subalgebra consisting of functions supported on $K$. 
Then $\mathcal H_K$ is isomorphic to the group algebra of the Weyl group $W$ of $G$ and thus it has a one-dimensional representation $\epsilon$ (the sign character). 
As an $\mathcal H$-module, $(\cind^{G}_U \psi)^I$  is isomorphic to the projective $\H$-module \cite{chan2018iwahori} 
\[ 
\mathcal H\otimes_{\mathcal H_K} \epsilon.
\]
If $G=\GL_n$  then a similar statement holds for all indecomposable Bernstein components  with appropriate Hecke algebras arising from Bushnell--Kutzko types 
 \cite{chan2019bernstein}. We build on methods of that paper. We  finish this paragraph by mentioning a recent article of Mishra and Pattanayak \cite{mishra2020principal} that  
 considers Bernstein components of $\cind_U^G (\psi)$ corresponding to representations induced from the Borel subgroup. Their result is formulated in terms 
 of Hecke algebras arising form types constructed by Roche.   
 
 \vskip 5pt 
  
 For a general $G$ one does not have a complete theory of types and corresponding Hecke algebras, but there is a replacement: endomorphism algebras 
of pro-generators of Bernstein components.  

It turns out that these algebras are more suited for the problem at hand. 
In more details, let $P=MN$ be a parabolic subgroup of $G$, and let $\sigma$ be an irreducible cuspidal representation of $M$. Let $M^\circ$  be the subgroup of $M$ consisting of all $m\in M$ such that 
$|\chi(m)|=1$ for all smooth characters $\chi: M \rightarrow \mathbb C^{\times}$.  Let $\sigma_0$ be an irreducible summand of $\sigma $ restricted to 
$M^\circ$. Then $i_P^G( \cind^M_{M^\circ} (\sigma_0) )$ is a projective $G$-module generating a single Bernstein component. 
Here $i_P^G$ denotes normalized parabolic induction. Let 
\[ 
\mathcal H = {\rm End}_G( i_P^G( \cind^M_{M^\circ} (\sigma_0) )). 
\] 
Observe that  we have  a natural inclusion 
\[ 
\mathcal A  = {\rm End}_M( \cind^M_{M_0} (\sigma_0) ) \subseteq \mathcal H. 
\] 
For every $G$-module $\pi$, 
\[ 
\mathfrak F(\pi) = \Hom_G ( i_P^G( \cind^M_{M^\circ} (\sigma_0) ), \pi) 
\] 
is naturally a right $\mathcal H$ module. The functor $\mathfrak F$ is an equivalence between the Bernstein component generated by  
$i_P^G( \cind^M_{M^\circ} (\sigma_0) )$ and the category of right $\mathcal H$-modules. 

Now assume $\sigma$ is $\psi$-generic. Let 
\[ 
\Pi= \mathfrak F( \cind_U^G (\psi)). 
\] 
It is not difficult to see, using Bernstein's second adjointness, that $\Pi \cong \mathcal A $, as $\mathcal A$-modules. Thus understanding $\Pi$ reduces to 
understanding $\mathcal H$ modules isomorphic to $\mathcal A$. This was done for $\GL_n$ in \cite{chan2019bernstein}. We extend this 
computation to $\mathcal H$ for classical groups.  For classical groups the algebra $\mathcal H$ has been computed by Heierman \cite{heiermann2011operateurs}. 
It turns out that a general $\mathcal H$ is a tensor product of Hecke algebras each of which is isomorphic to the Iwahori-Hecke algebra $\GL_n$ or to an algebra of the type 
$\tilde C_n$, with unequal parameters. Assume that $\mathcal H$ corresponds to $\tilde C_n$. Its diagram has two special vertices, denoted by $0$ and $n$. Corresponding 
to them, we have two finite subalgebras $\mathcal H_0$ and $\mathcal H_n$ of $\mathcal H$. We prove that any $\mathcal H$-module isomorphic to $\mathcal A$ is necessarily 
\[ 
\mathcal H \otimes_{\mathcal H_0} \epsilon_0 \text{ or } \mathcal H \otimes_{\mathcal H_n} \epsilon_n
\] 
for a one-dimensional representation $\epsilon_0$ or $\epsilon_n$.  Here we moved to more familiar language of left $\mathcal H$-modules. This is harmless indeed, since
$\mathcal H$ is isomorphic to its opposite algebra; this follows from the Iwahori-Matsumoto relations. If $G=\SO(2n+1)$  we determine precisely the isomorphism 
class of $\Pi$. 

\vskip 5pt 
We finish this paper with two appendices, added as a convenience to the reader. 
The first one gives a proof of the fact that the functor $\mathfrak F$ is the claimed equivalence. The second gives an isomorphism of $\mathcal H$ with the 
Hecke algebra arising from the type constructed by Stevens. 

\vskip 5pt 
We would like to thank K.-Y. Chan and S. Stevens for useful communications. 
G. Savin is partially supported by a grant from the National Science Foundation, DMS-1901745.

\section{Preliminaries}
\subsection{Notation}
Throughout the paper, $F$ will denote a non-Archimedean local field of residue characteristic $q$ and uniformizer $\varpi$, equipped with the absolute value $|\cdot|$ normalized in the usual way.

We let $G$ denote the group of $F$-points of a connected, split reductive algebraic group defined over $\F$. From \S 2.4 on, we specialize to the case where $G$ is the special odd orthogonal group $\SO(2N+1)$. By $\text{Rep}(G)$ we denote the category of smooth complex representations of $G$.

For an arbitrary group $H$, we let $X(H)$ denote the group of complex characters of $H$.

\subsection{Unramified characters}
\label{subs_unramified}
If $M$ is a Levi subgroup of $G$, we let $M^\circ = \bigcap_{\chi} \ker |\chi|$, the intersection taken over the set of all rational characters $\chi: M \to F^\times$. We say that a (complex) character $\chi$ of $M$ is unramified if it is trivial on $M^\circ$; we let $X^\text{nr}(M)$ denote the group of all unramified characters on $M$. Then $M/M^\circ$ is a free $\mathbb{Z}$-module of finite rank, and the group  $X^\text{nr}(M) = X(M/M^\circ)$ has a natural structure of a complex affine variety. For any element $m\in M$, we denote by $b_m$ the evaluation $\chi \mapsto \chi(m)$.

Now let $\sigma$ be an irreducible cuspidal representation of $M$, and set $M^\sigma = \{m \in M : {}^m{\sigma} \cong \sigma\}$. Then $M/M^\sigma$ is a finite Abelian group, and we let $\A$ denote the ring of regular functions on the quotient variety $X(M/M^\circ)/X(M/M^\sigma)$. Since $M^\sigma/M^\circ$ is once again a free $\mathbb{Z}$-module (of the same rank as $M/M^\circ$), we have $\A \cong \mathbb{C}[M^\sigma/M^\circ]$. Furthermore, letting $\sigma_0$ denote an arbitrary irreducible constituent of $\sigma|_{M^\circ}$, we have a canonical isomorphism $\A \cong \End_M(\cind_{M^\circ}^M \sigma_0)$. Indeed, this follows from a simple application of Mackey theory. We refer the reader to \cite[\S 1.17, \S 4]{heiermann2011operateurs} for additional details.

\subsection{The Hecke algebra of a Bernstein component}
\label{subs_Hecke}
If $\pi$ is an irreducible representation of $G$, there is a Levi subgroup $M$ of $G$ and an irreducible cuspidal representation $\sigma$ of $M$ such that $\pi$ is (isomorphic to) a subquotient of $i_P^G(\sigma)$. Here $P$ is a parabolic subgroup of $G$ with Levi component $M$. The pair $(M,\sigma)$ is determined by $\pi$ up to conjugacy; we call $(M,\sigma)$ the cuspidal support of $\pi$.

We say that the two pairs $(M_1,\sigma_1)$ and $(M_2,\sigma_2)$ as above are inertially equivalent if there exists an element $g\in G$ and an unramified character $\chi$ of $M_2$ such that
\[
{}^g\sigma_1 = \sigma_2 \otimes \chi.
\]
This is an equivalence relation on the set of all pairs $(M,\sigma)$.
Given an equivalence class $[(M,\sigma)]$, we denote by $\text{Rep}_{(M,\sigma)}(G)$ the full subcategory of $\text{Rep}(G)$ defined by the requirement that all irreducible subquotients of every object in $\text{Rep}_{(M,\sigma)}(G)$ be supported within the inertial class $[(M,\sigma)]$.
A classic result of Bernstein then shows that the category $\text{Rep}(G)$ decomposes as a direct product
\[
\text{Rep}(G) = \prod_{[(M,\sigma)]} \text{Rep}_{(M,\sigma)}(G)
\]
taken over the set of all inertial equivalence classes. We refer to $\text{Rep}_{(M,\sigma)}(G)$ as the Bernstein component attached to the pair $(M,\sigma)$. For a detailed discussion of the above results, see \cite{bernstein1984centre} or \cite{bernsteindraft}.

For each Bernstein component $\text{Rep}_{(M,\sigma)}(G)$ one can construct a projective generator $\Gamma_{(M,\sigma)}$ by setting
\[
\Gamma_{(M,\sigma)} = i_P^G(\cind_{M^\circ}^M(\sigma_0)).
\]
Here $\sigma_0$ is any irreducible component of the (semisimple) restriction $\sigma|_{M^\circ}$. We now obtain a functor from the category $\text{Rep}_{(M,\sigma)}(G)$ to the category of right $\End_G(\Gamma_{(M,\sigma)})$-modules given by
\[
\pi \mapsto \Hom(\Gamma_{(M,\sigma)},\pi).
\]
The fact that $\Gamma_{(M,\sigma)}$ is a projective generator implies that this is an equivalence of categories. A detailed explanation of this fact is provided in Appendix \ref{sec_appA}, Theorem \ref{thm_A4}.

Given a Bernstein component $\mathfrak{s} = (M,\sigma)$, we use $\H_\mathfrak{s}$ to denote $\End_G(\Gamma_\mathfrak{s})$ and refer to it as the Hecke algebra attached to the component $\mathfrak{s}$. Furthermore, for any $\pi \in \text{Rep}(G)$ we let $\pi_\mathfrak{s}$ denote the corresponding $\H_\mathfrak{s}$-module $\Hom(\Gamma_{\mathfrak{s}},\pi)$.

Though we do not use it here, we point out that there is another highly useful approach to analyzing Bernstein components, based on the theory of types developed by Bushnell and Kutzko \cite{bushnell1998smooth}. One can show that the Hecke algebra used by Bushnell and Kutzko is in fact isomorphic to the algebra $\H_\mathfrak{s}$ introduced above; we prove this fact in Appendix \ref{sec_appB}. Therefore---for the purposes of this paper---the two approaches are equivalent.

\subsection{Cuspidal representations}
\label{subs_cusp}
Here we briefly recall some facts and introduce notation related to cuspidal representations. For the sake of concreteness we now specialize to $G=\SO(2N+1)$, but we point out that these facts generalize to all classical $p$-adic groups.

Let $\rho$ and $\sigma$ be irreducible unitarizable cuspidal representations of $\GL_k(F)$ and $\SO(2n_0+1)$ respectively. We consider the representation $\nu^\alpha\rho \rtimes \sigma$, where $\alpha \in \mathbb{R}$. Here and throughout the paper, we use $\nu$ to denote the unramified character $\lvert\det\rvert$ of the general linear group. If $\rho$ is not self-dual, the above representation never reduces. If $\rho$ is self-dual, then there exists a unique $\alpha \geq 0$ such that $\nu^\alpha\rho \rtimes \sigma$ is reducible; we denote it by $\alpha_\rho$. The number $\alpha_\rho$ has a natural description in terms of the Langlands parameters: if $a_\rho = \max\{a: \rho \otimes S_a \text{ appears in the parameter of }\sigma\}$, where $S_a$ denotes the (unique) irreducible algebraic $a$-dimensional representation of $SL_2(F)$, then $\alpha_\rho = \frac{a_\rho+1}{2}$.

\subsection{The structure of the Hecke algebra}
\label{subs_structure}

We now fix the setting for the rest of the paper. We retain the notation $\rho, \sigma$ from the previous subsection, and consider the cuspidal component $\mathfrak{s}$ attached to the representation
\[
\underbrace{\rho \otimes \dotsb \otimes \rho}_{n \text{ times}}\ \otimes\ \sigma
\]
of the Levi subgroup $M = \GL_k(F) \times \dotsb \times \GL_k(F) \times \SO(2n_0+1)$ in  $\SO(2N+1)$, where $N = nk+n_0$. In the rest of the paper, we restrict our attention to cuspidal components of the above form. This does not present a significant loss of generality, since the Hecke algebra of a general cuspidal component is the product of algebras corresponding to components described above. To simplify notation, we set $\H = \H_\mathfrak{s}$.

The structure of the Hecke algebra $\H$ has been completely described by Heiermann \cite{heiermann2010parametres,heiermann2011operateurs}. In his work, Heiermann shows that $\H$ is a Hecke algebra with parameters (the type of the algebra and the parameters depending on the specifics of the given case). When the component in question is of the form described above, we have three distinct cases, which we now summarize. For basic definitions and results on Hecke algebras with parameters, we refer to the work of Lusztig \cite{lusztig1989affine}. 

In what follows, we let $t$ denote the order of the (finite) group $\{\chi \in X^\text{nr}(M): \rho \otimes \chi \cong \rho\}$. In all three cases, the commutative algebra $\A$ (see \S \ref{subs_unramified}) is a subalgebra of $\H$. In the present setting, the rank of the free module $M^\sigma/M^\circ$ is equal to $n$. We can thus identify $\A = \mathbb{C}[M^\sigma/M^\circ]$ with the algebra of Laurent polynomials $\mathbb{C}[X_1^\pm,\dotsc,X_n^\pm]$. We fix this isomorphism explicitly: For $i = 1, \dotsc, n$, let $h_i$ be the element of $M$ which is equal to $\text{diag}(\varpi,1,\dotsc,1)$ on the $i$-th $\GL$ factor, and equal to the identity elsewhere. Then $X_i = b_{h_i}^t$. The three cases are
\begin{enumerate}[(i)]
\item No representation of the form $\rho\otimes \chi$ with $\chi \in X^\text{nr}(M)$ is self-dual. 

In this case, the algebra $\H$ is described by an affine Coxeter diagram of type $\tilde{A}_{n-1}$ with equal parameters $t$. In other words, it is isomorphic to the algebra $\H_n$ described in \cite{chan2019bernstein}: there are elements $T_1, \dotsc, T_{n-1}$ which satisfy the quadratic relation
\[
(T_i + 1)(T_i - q^t) = 0, \quad i = 1,\dotsc, n-1
\]
and commutation relations
\[
T_i f  - f^{s_i}T_i = (q^t-1) \frac{f - f^{s_i}}{1 - X_{i+1}/X_i}, \quad i = 1,\dotsc, n-1,
\]
where $f^{s_i}$ is obtained from $f \in \A$ by swapping $X_i$ and $X_{i+1}$.
\end{enumerate}
In the two remaining cases there is an unramified character $\chi$ of $M$ such that $\rho \otimes \chi$ is self-dual. Without loss of generality, we may assume that $\rho$ is self-dual. Then, up to isomorphism, there is a unique representation of the form $\rho \otimes \chi$ which is also self-dual; we denote it by $\rho^-$. We set $\alpha = \alpha_\rho$, $\beta = \alpha_{\rho^-}$ (see \S\ref{subs_cusp} for notation). Since the situation is symmetric, we may (and will) assume that $\alpha \geq \beta$. We then have the following two cases:
\begin{itemize}
\item[(ii)] $\alpha = \beta = 0$.

In this case, $\H$ is described by an affine Coxeter diagram of type $\tilde{C}_n$:
\[
\dynkin[labels={0,t,t,t,t,t,0}, extended, Coxeter, Kac, root/.style={fill=white}, root radius = 1mm, edge/.style={line width = 0.3mm}]C{}
\]
The nodes correspond to operators $T_0, \dotsc, T_n$ which satisfy the quadratic relations
\[
T_0^2 = 1, \quad  T_n^2 = 1, \quad (T_i+1)(T_i-q^t) = 0\quad  \text{for }i = 1 \dotsc, n-1,
\]
and the braid relations as prescribed by the diagram. The commutation relations for $T_i, i=1,\dotsc, n-1$ are the same as in case (i), whereas $T_n$ satisfies
\[
fT_n - T_nf^\vee = 0
\]
with $f^\vee(X_1,\dotsc,X_{n-1},X_n) = f(X_1,\dotsc,X_{n-1},1/X_n)$.

\item[(iii)] $\alpha > 0$.

In this case, $\H$ is described by an affine Coxeter diagram of type $\tilde{C}_n$:
\[
\dynkin[labels={s,t,t,t,t,t,r}, extended, Coxeter, Kac, root/.style={fill=white}, root radius = 1mm, edge/.style={line width = 0.3mm}]C{}
\]
Here $s = t(\alpha-\beta)$ and $r = t(\alpha + \beta)$.
Again, the nodes correspond to operators $T_0, \dotsc, T_n$ which satisfy quadratic relations analogous to those in (ii), along with the braid relations.
The commutation relations for $T_i, i=1,\dotsc, n-1$ are the same as in case (i), whereas $T_n$ satisfies
\[
fT_n -  T_nf^\vee  = \left((q^r-1) + \frac{1}{X_n}(\sqrt{q}^{r+s}-\sqrt{q}^{r-s})\right)\frac{f - f^\vee}{1-1/X_n^2}.
\]

\end{itemize}

Cases (i)--(iii) correspond to the cases (I)--(III) listed in \cite{heiermann2010parametres}, Section 3.1. The above results are collected in Section 3.4 of \cite{heiermann2010parametres}. A detailed construction of the operators $T_i$ (starting from standard intertwining operators) is the subject matter of \cite{heiermann2011operateurs}; we do not need the details here, except in a special case discussed in the final part of Section \ref{subs_GG}. To facilitate the comparison of the above summary to the works of Heiermann \cite{heiermann2010parametres,heiermann2011operateurs}, we point out the way in which our summary deviates from them:
\begin{rem}
\begin{enumerate}[a)]
\item The explicit isomorphism $\mathbb{C}[M^\sigma/M^\circ] \cong \mathbb{C}[X_1^\pm,\dotsc,X_n^\pm]$ we use is different than the one used in \cite{heiermann2010parametres}; there, Heiermann sets $X_i = b_{h_ih_{i+1}^{-1}}^t$ for $i = 1, \dotsc, n-1$.
\item The operator $T_0$ which appears in cases (ii) and (iii) above is not needed to describe $\H$, and is therefore not used in \cite{heiermann2010parametres} and \cite{heiermann2011operateurs}. To be precise, the Hecke algebra is generated over $\A$ by the operators $T_1,\dotsc,T_n$ and determined by the quadratic and braid relations they satisfy, along with the commutation relations listed above. Each of the operators $T_1,\dotsc, T_n$ corresponds to a simple reflection in the Weyl group, whereas the operator $T_0$ corresponds to the reflection given by the (in this case, unique) minimal element of the root system---see \cite[\S 1.4]{lusztig1989affine}. In fact, we define $T_0$ by setting
\[
T_0 = \sqrt{q}^{s+t(n-1)+r}X_1T_w^{-1},
\]
where $T_w = T_1\dotsb T_{n-1}T_nT_{n-1}\dotsb T_1$---see \cite[\S 2.8,3.3]{lusztig1989affine}. We use $T_0$ out of convenience, as it allows a more symmetric description of certain $\H$-modules.
\item The description of $\H$ in Case (ii) differs from the one given in \cite{heiermann2010parametres}, which views $T_n$ as the non-trivial element of the $R$-group. However, one can verify that the description we use is equivalent. With our description, (ii) can be viewed as a special case of (iii) (with $r = s = 0$); however, since our results in (ii) require additional analysis, we still state the two cases separately.
\end{enumerate}
\end{rem}


\subsection{Generic representations}
We recall only the most basic facts here; a general reference is e.g.~\cite{shahidi1981certain}. Again, we focus on the case $G = \SO(2N+1,F)$.

Let $B = TU$ denote the standard Borel subgroup of $\SO(2N+1,F)$, i.e.~the group consisting of all upper-triangular matrices in $\SO(2N+1,F)$. Here $T$ denotes the maximal torus (diagonal matrices), and $U$ is the group of all unipotent upper-triangular matrices. We fix a non-degenerate character $\psi$ of $U$. We say that a representation $(\pi,V)$ of $SO(2N+1,F)$ is $\psi$-generic if there exists a so-called Whittaker functional---that is, a linear functional $L: V\to \mathbb{C}$ such that
\[
L(\pi(u)v) = \psi(u)L(v), \quad \forall u\in U, v\in V.
\]
The key fact we use throughout is that the space of Whittaker functionals is at most one-dimensional. If $\pi$ is an irreducible representation with a Whittaker functional $L$, one can consider the space $\mathcal{V}$ of all functions $f_v:G \to \mathbb{C}$, where $f_v(g) =L(\pi(g)v)$; we let $G$ act on this space by right translations. Then $v\mapsto f_v$ is a $G$-isomorphism; we say that $\mathcal{V}$ is the Whittaker model for $\pi$.

Now let $P=MN$ be a parabolic subgroup of $G$. If $\sigma$ is an irreducible generic representation of $M$, then one can construct a Whittaker functional on $i_P^G\sigma$ (see \cite[Proposition 3.1]{shahidi1981certain} and equation \eqref{eq_heredity} below); in other words, the induced representation is $\psi$-generic as well. We use this fact later, in Section \ref{subs_GG}.

\section{The Gelfand--Graev representation}

Let $U$ be the group consisting of all unipotent upper-triangular matrices in $\SO(2N+1)$. We fix a non-degenerate character $\psi : U \to \mathbb{C}^\times$ and consider the compactly induced representation $\cind_U^G(\psi)$. This is the so-called Gelfand--Graev representation. It is the “universal” $\psi$-generic representation: every $\psi$-generic representation of $G$ appears as a quotient (with multiplicity one) of $\cind_U^G(\psi)$. Our goal is to determine the structure of the Gelfand--Graev representation viewed as an $\H$-module.

We let $\Pi$ denote the $\H$-module $(\cind_U^G(\psi))_\mathfrak{s}$, where $\mathfrak{s}$ is the cuspidal component we fixed in Section \ref{subs_structure}. Our investigation of the structure of $\Pi$ begins with the following result.
\begin{prop}
As $\A$-modules, we have $\Pi \cong \A$.
\end{prop}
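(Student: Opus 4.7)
The plan is to apply Bernstein's second adjointness, identify the Jacquet module of the Gelfand--Graev representation with a Gelfand--Graev representation of the Levi, and then finish with a Whittaker-model count.

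First, I would use second adjointness to move the parabolic induction out of the first slot:
\[
\Pi = \Hom_G\bigl(i_P^G \cind_{M^\circ}^M \sigma_0,\; \cind_U^G \psi\bigr) \;\cong\; \Hom_M\bigl(\cind_{M^\circ}^M \sigma_0,\; \mathrm{r}_{\bar P}(\cind_U^G \psi)\bigr).
\]
The key input is then the classical identification $\mathrm{r}_{\bar P}(\cind_U^G \psi) \cong \cind_{U_M}^M \psi_M$ as smooth $M$-representations, where $U_M = U \cap M$ and $\psi_M = \psi|_{U_M}$; this is proved via a Bernstein--Zelevinsky geometric-lemma argument in which the non-degeneracy of $\psi$ forces all but the open-cell contribution to vanish.

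Applying Frobenius reciprocity for compact induction and then Mackey theory (using $U_M \subset M^\circ$ and that $M/M^\circ$ is discrete), one obtains
\[
\Pi \;\cong\; \bigoplus_{m \in M/M^\circ} \Hom_{M^\circ}\bigl(\sigma_0,\; \cind_{U_M}^{M^\circ} \psi_M^m\bigr),
\]
where $\psi_M^m(u) = \psi_M(m^{-1} u m)$. For each summand I would exploit the cuspidality of $\sigma_0$ together with the fact that $M^\circ$ has compact center: Whittaker functions on $\sigma_0$ are then compactly supported modulo $U_M$, so every $M^\circ$-map $\sigma_0 \to \Ind_{U_M}^{M^\circ}\psi_M^m$ lands in the compactly induced subspace, and ordinary Frobenius for smooth induction gives
\[
\Hom_{M^\circ}\bigl(\sigma_0,\; \cind_{U_M}^{M^\circ} \psi_M^m\bigr) \;\cong\; \Hom_{U_M}(\sigma_0, \psi_M^m),
\]
which by uniqueness of the Whittaker model is at most one-dimensional.

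To finish I would choose $\sigma_0$ to be the $\psi_M$-generic irreducible constituent of $\sigma|_{M^\circ}$ and check, via Clifford theory applied to $\sigma \supset \sigma_0$, that the non-zero summands are indexed exactly by $M^\sigma/M^\circ$ (the point being that $\sigma_0$ is $\psi_M^m$-generic iff ${}^{m^{-1}}\sigma_0\cong\sigma_0$, and under our hypotheses this is equivalent to $m\in M^\sigma$). Tracing the right $\A$-action through the whole chain (precomposition with endomorphisms of $\cind_{M^\circ}^M \sigma_0$ corresponds, after the Mackey decomposition, to translation in $M^\sigma/M^\circ$) then upgrades the resulting vector-space isomorphism $\Pi \cong \mathbb{C}[M^\sigma/M^\circ] \cong \A$ to an isomorphism of $\A$-modules. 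The main obstacle is the second step: obtaining the correctly normalized identification of $\mathrm{r}_{\bar P}(\cind_U^G\psi)$ as a Gelfand--Graev of the Levi. Everything else is a sequence of standard adjunctions plus a Whittaker count, and the $\A$-linearity of the final isomorphism follows formally from the naturality of each step.
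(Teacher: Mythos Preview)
Your proposal is correct and follows the paper's strategy: second adjointness, then the identification $r_{\overline{N}}(\cind_U^G\psi)\cong\cind_{U\cap M}^M\psi$ (the paper cites Bushnell--Henniart for this), then uniqueness of the $\psi$-generic constituent $\sigma_0$ of $\sigma|_{M^\circ}$. The only difference is in the endgame. You Mackey-decompose $\Hom_M(\cind_{M^\circ}^M\sigma_0,\cind_{U\cap M}^M\psi)$ over $M/M^\circ$, identify each summand with a one-dimensional Whittaker space, count the nonzero ones, and then trace the $\A$-action through the chain. The paper instead observes directly that $\cind_{U\cap M}^{M^\circ}\psi\cong\sigma_0\oplus\sigma_0^\perp$ where $\sigma_0^\perp$ contains no $M$-conjugate of $\sigma_0$; inducing to $M$ gives $\cind_{U\cap M}^{M}\psi\cong\cind_{M^\circ}^M\sigma_0\oplus\cind_{M^\circ}^M\sigma_0^\perp$, whence
\[
\Pi\cong\Hom_M(\cind_{M^\circ}^M\sigma_0,\cind_{M^\circ}^M\sigma_0)=\End_M(\cind_{M^\circ}^M\sigma_0)=\A,
\]
with the $\A$-module structure automatic. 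This packaging is slightly cleaner: it spares you the separate verifications that $\mathrm{Stab}_M(\sigma_0)=M^\sigma$ and that the $\A$-action on your direct sum corresponds to translation in $M^\sigma/M^\circ$---both true, but rendered unnecessary by landing directly in an $\End$ algebra.
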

\begin{proof}
The $\H$-module $\Pi$ is given by $\Hom_G(\Gamma_\mathfrak{s},\cind_U^G(\psi))$, where $\Gamma_\mathfrak{s} =  i_P^G(\cind_{M^\circ}^M(\sigma_0))$. 
Recall that $\sigma_0$ was taken to be an arbitrary irreducible constituent of $\sigma|_{M^\circ}$. However, having now fixed the Whittaker datum for $M$ (and thus for $M^\circ$), there exists a unique irreducible summand of $\sigma|_{M^\circ}$ which is $\psi$-generic. Thus, from now on, we assume $\sigma_0$ is this unique generic constituent of $\sigma|_{M^\circ}$.

To view $\Pi$ as an $\A= \End_M(\cind_{M^\circ}^M(\sigma_0))$-module, we use the Bernstein version of Frobenius reciprocity:
\[
\Pi = \Hom_G( i_P^G(\cind_{M^\circ}^M(\sigma_0)),\cind_U^G(\psi)) = \Hom_M( \cind_{M^\circ}^M(\sigma_0), r_{\overline{N}}(\cind_U^G(\psi)));
\]
here $r_{\overline{N}}$ denotes the Jacquet functor with respect to $\overline{P}=M\overline{N}$, the parabolic opposite to $P$.

We now use the fact that $ r_{\overline{N}}(\cind_U^G(\psi))$ is isomorphic to the Gelfand--Graev representation of $M$, $\cind_{U\cap M}^M(\psi)$ (see \cite[\S 2.2]{bushnell2003generalized}). Furthermore, with the above choice of $\sigma_0$, the representation $\cind_{M^\circ}^M(\sigma_0)$ is precisely the sum of all maximal $({}^m\sigma_0)$-isotypic components of $\cind_{U\cap M}^M(\psi)$, where ${}^m\sigma_0$ ranges over the set of all $M$-conjugates of $\sigma_0$. Indeed, $\cind_{U\cap M}^M(\psi)$ is itself induced from the Gelfand--Graev representation of $M^\circ$, $\cind_{U\cap M}^{M^{\circ}}(\psi)$. Since $\sigma_0$ appears with multiplicity one, and no other $m$-conjugate of $\sigma_0$ is generic, we have $\cind_{U\cap M}^{M^{\circ}}(\psi) \cong \sigma_0 \oplus \sigma_0^\bot$, where $\sigma_0^\bot$ is a representation which contains no $M$-conjugate of $\sigma_0$. Inducing to $M$ we get $\cind_{U\cap M}^M(\psi) = \cind_{M^\circ}^M(\sigma_0) \oplus \cind_{M^\circ}^M(\sigma_0^\bot)$, which proves the above claim about isotypic components. Thus, viewed as an $\A$-module, $\Pi$ is isomorphic to
\begin{align*}
\Hom_M( \cind_{M^\circ}^M(\sigma_0), r_{\overline{N}}(\cind_U^G(\psi))) &= \Hom_M( \cind_{M^\circ}^M(\sigma_0), \cind_{U\cap M}^M(\psi))\\
& = \Hom_M( \cind_{M^\circ}^M(\sigma_0), \cind_{M^\circ}^M(\sigma_0)) = \A. 
\end{align*}
%
\end{proof}

The above result suggests the following approach to determining the $\H$-module structure of $\Pi$: First, we find all possible $\H$-module structures on $\A$. After that, we need only determine which one of those structures describes $\Pi$. In the following subsection, we compute the possible $\H$-structures on $\A$.

\subsection{$\H$-module structures on $\mathcal{A}$}
\label{subs_H_on_A}
First, assume that we are in Case (i) (see \S \ref{subs_structure}). Then the situation is precisely the one treated in \cite{chan2019bernstein}, and the possible $\H$-module structures on $\A$ are determined in Section 2.2 there. We have the following:
\begin{prop}[Case (i)]
\label{prop_structure_case_i} Let $\Pi$ be a $\H$-module which is isomorphic to $\A$ as an $\A$-module. Then $\Pi \cong \H \otimes_{\H_{S_n}} \epsilon$, where $\epsilon$ is a $1$-dimensional representation of $\H_{S_n}$.
\end{prop}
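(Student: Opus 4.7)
Since Case (i) places $\H$ in the $\tilde{A}_{n-1}$ family treated by Chan, this proposition coincides with the classification of $\H$-module structures on $\A$ carried out in \cite[\S 2.2]{chan2019bernstein}, and my plan is to reproduce their argument. I would fix a generator $v_0 \in \Pi$ with $\Pi = \A v_0$ (possible since $\Pi \cong \A$ as $\A$-modules) and write $T_i v_0 = g_i v_0$ for unique $g_i \in \A$, $i = 1,\dotsc,n-1$. The goal is to show that after replacing $v_0$ by $fv_0$ for an appropriate $f \in \A^{\times}$, each $T_i$ acts on $\tilde v = fv_0$ by a common scalar $g \in \{-1, q^t\}$; the assignment $\epsilon(T_i) := g$ then defines a character of $\H_{S_n}$, and the natural $\H$-linear map $\H \otimes_{\H_{S_n}} \epsilon \to \Pi$, $h \otimes 1 \mapsto h\tilde v$, is an isomorphism of $\A$-modules because both source and target are free of rank one over $\A$ (the source by Bernstein's presentation $\H \cong \A \otimes_\C \H_{S_n}$ as $(\A, \H_{S_n})$-bimodules) and the map matches their canonical generators.

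The technical core is the normalization step. I would pass to $\H \otimes_\A K$ with $K = \mathrm{Frac}(\A)$ and introduce the Lusztig--Bernstein intertwiners $\tau_i = T_i - (q^t-1)X_i/(X_i - X_{i+1})$, which satisfy $\tau_i f = f^{s_i} \tau_i$ for every $f \in \A$ and $\tau_i^2 = (X_i - q^t X_{i+1})(q^t X_i - X_{i+1})/(X_i - X_{i+1})^2 \in K^{s_i}$. On the one-dimensional $K$-space $\Pi \otimes_\A K = K v_0$ each $\tau_i$ acts semilinearly: $\tau_i v_0 = \mu_i v_0$ for some $\mu_i \in K$ satisfying the norm equation $\mu_i \mu_i^{s_i} = \tau_i^2$. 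A direct substitution shows that the two distinguished choices $\mu_i = (X_i - q^t X_{i+1})/(X_i - X_{i+1})$ and $\mu_i = (X_{i+1} - q^t X_i)/(X_i - X_{i+1})$ yield $T_i v_0 = q^t v_0$ and $T_i v_0 = -v_0$ respectively, and Hilbert 90 for the quadratic extension $K^{s_i} \subset K$ guarantees that any $\mu_i$ satisfying the norm equation is conjugate to one of these two by a rescaling $v_0 \to \phi_i v_0$, $\phi_i \in K^{\times}$.

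The main obstacle I anticipate is the simultaneous normalization: finding a single $f \in \A^{\times}$ that brings every $\mu_i$ into distinguished form, and ensuring that the rescaling can be taken in $\A^{\times}$ rather than merely in $K^{\times}$. Both points are controlled by the braid relations $\tau_i \tau_{i+1} \tau_i = \tau_{i+1} \tau_i \tau_{i+1}$, which impose explicit compatibility constraints between neighbouring $\mu_i$'s. Once the simultaneous normalization is achieved, the braid relation for the $T_i$'s applied to $\tilde v = fv_0$ reduces to the scalar identity $g_i^2 g_{i+1} = g_{i+1}^2 g_i$; since every $g_i$ is a nonzero element of $\{-1, q^t\}$, this forces $g_i = g_{i+1}$ for all $i$. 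Hence the $g_i$ agree on a common constant $g$, the character $\epsilon$ is well-defined, and the isomorphism $\Pi \cong \H \otimes_{\H_{S_n}} \epsilon$ follows from the comparison described in the first paragraph.
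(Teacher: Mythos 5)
Your reduction steps are sound: given an invertible simultaneous eigenvector $\tilde v$ for $T_1,\dotsc,T_{n-1}$, the braid relations do force a common eigenvalue $g\in\{-1,q^t\}$, and the map $h\otimes 1\mapsto h\tilde v$ is an isomorphism for exactly the reasons you give. Your intertwiner computations are also correct: $\tau_i f=f^{s_i}\tau_i$, your formula for $\tau_i^2$, and the two distinguished solutions of the norm equation all check out. Note, however, that the paper does not reprove this proposition at all---it simply invokes \cite[\S 2.2]{chan2019bernstein}---so what you are really proposing is a substitute for that argument, and it should be judged as such.

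The genuine gap is in the step you yourself flag as ``the main obstacle'' and then dispose of by appealing to the braid relations. Hilbert 90 classifies the solutions of $\mu_i\mu_i^{s_i}=\tau_i^2$ in $K=\mathrm{Frac}(\A)$, but the operative constraint is integrality: $T_iv_0=g_iv_0$ with $g_i\in\A$ (since $\Pi=\A v_0$), and the only rescalings available are by units of $\A$, i.e.\ scalars times monomials. The braid relations cannot supply this: already for $n=2$ there is a single generator and no braid relation whatsoever, yet the problem of showing that every Laurent-polynomial solution $g_1$ of the quadratic functional equation is a monomial twist of a constant one is fully present. What actually closes this gap in \cite[\S 2.2]{chan2019bernstein}---and in this paper's own Lemma \ref{lem_sol}, which is the exact analogue for the generator $T_n$ in type $\tilde C_n$---is a direct degree and leading-coefficient analysis of the functional equation $gg^{s_i}=\dotsb$ over the Laurent polynomial ring: the solutions form an explicit discrete family of truncated geometric series in $X_{i+1}/X_i$, each visibly normalized by a monomial, after which choosing exponents $a_1,\dotsc,a_n$ with the differences $a_i-a_{i+1}$ prescribed handles the simultaneous normalization. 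That classification is the missing idea; the Hilbert 90 orbit over $K$ is far too large, and without the degree analysis your plan does not close.
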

\noindent Here $\H_{S_n}$ denotes the finite-dimensional algebra generated by $T_1,\dotsc,T_{n-1}$; we have $\H = \A \otimes_\mathbb{C} \H_{S_n}$. Furthermore, $\H_{S_n}$ has precisely two one-dimensional representations:
\begin{align*}
\epsilon_{-1}:  T_i \mapsto -1 \quad &\text{ for } i = 1, \dotsc, n-1; \quad  \text{and}\\
\epsilon_{q^t}: T_i \mapsto q^t \quad &\text{ for } i = 1, \dotsc, n-1.
\end{align*}

We now treat Cases (ii) and (iii), simultaneously. Recall that in these cases the algebra $\H$ is described by an affine Coxeter diagram of type $\tilde{C}_n$. We let $\H_0$ and $\H_n$ denote the algebra obtained by removing the vertices which correspond to $T_0$ and $T_n$, respectively. In other words, $\H_0$ is generated by $T_1,\dotsc,T_n$ as an $\A$-algebra, whereas $\H_n$ is generated by $T_0,\dotsc,T_{n-1}$. Note that we have $\H = \A \otimes_{\mathbb{C}}\H_n = \A \otimes_{\mathbb{C}}\H_0$.
We now prove the following result.

\begin{prop}[Cases (ii) and (iii)]
\label{prop_HstructureA} Let $\Pi$ be a $\H$-module which is isomorphic to $\A$ as an $\A$-module. Then
\[
\Pi \cong \H \otimes_{\H_{0}} \epsilon_0 \quad \text{or} \quad \Pi \cong \H \otimes_{\H_{n}} \epsilon_n.
\]
Here $\epsilon_0$ (resp.~$\epsilon_n$) is a $1$-dimensional representation of $\H_{0}$ (resp.~$\H_{n}$).
\end{prop}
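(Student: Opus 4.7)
The plan is to fix a cyclic $\A$-generator $v_0\in\Pi$ (identifying $v_0$ with $1$ under $\Pi\cong\A$) and to determine the possible actions of the simple generators $T_0,T_1,\ldots,T_n$ on $v_0$, following the blueprint of the case-(i) argument in \cite{chan2019bernstein}. The Bernstein--Lusztig decomposition gives $\H=\A\otimes_\C\H_0=\A\otimes_\C\H_n$ as left $\A$-modules, so any induced module $\H\otimes_{\H_f}\epsilon$ (with $\H_f\in\{\H_0,\H_n\}$ and $\epsilon$ one-dimensional) is automatically free of rank one over $\A$; conversely $\Pi$ is of this form precisely when it admits a cyclic $\A$-generator on which $\H_f$ acts by a character. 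The task is therefore to produce such a generator.

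First, for $i=1,\ldots,n-1$ I would show $T_iv_0\in\C v_0$. Writing $T_iv_0=P_iv_0$ with $P_i\in\A$, the quadratic relation $(T_i+1)(T_i-q^t)=0$ together with the commutation relation of \S\ref{subs_structure} yields a polynomial identity for $P_i$ whose only Laurent polynomial solutions, found by decomposing $P_i$ into $s_i$-symmetric and antisymmetric parts, are the constants $-1$ and $q^t$---this is the analysis of \cite[\S 2.2]{chan2019bernstein}. The odd braid relations between $T_i$ and $T_{i+1}$ then force $P_1=\cdots=P_{n-1}$, yielding a common scalar $\lambda\in\{-1,q^t\}$, so that $v_0$ is already a common eigenvector for $\langle T_1,\ldots,T_{n-1}\rangle\subset\H_0\cap\H_n$.

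Next, write $T_nv_0=P_nv_0$; the quadratic relation for $T_n$ together with the commutation relation $fT_n-T_nf^\vee=C(X_n)(f-f^\vee)/(1-1/X_n^2)$ produces a functional equation for $P_n$. If every solution is forced to be a constant in $\{-1,q^r\}$, then $v_0$ is an $\H_0$-eigenvector and $\Pi\cong\H\otimes_{\H_0}\epsilon_0$, with $\epsilon_0$ determined by $(\lambda,P_n)$. Otherwise, I would use the expression $T_0=\sqrt{q}^{s+t(n-1)+r}X_1T_w^{-1}$ with $T_w=T_1\cdots T_{n-1}T_nT_{n-1}\cdots T_1$ to compute $T_0v_0$ in terms of $\lambda$ and $P_n$. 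Any monomial rescaling $v_0'=X^av_0$ preserving the $\langle T_1,\ldots,T_{n-1}\rangle$-eigenvector property requires $a_1=\cdots=a_n$, so only a one-parameter family of candidates remains; the plan is to show that exactly one value of this parameter makes $v_0'$ a $T_0$-eigenvector, producing $\Pi\cong\H\otimes_{\H_n}\epsilon_n$.

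The main obstacle I expect is the analysis of the $T_n$ functional equation and the matching of its non-scalar solutions with the $T_0$-eigenvector picture: the $\vee$-involution behaves differently from the $s_i$-reflections, and the inhomogeneous coefficient $C(X_n)$ carries the unequal parameters $r,s$, so identifying which non-constant $P_n$ actually occur and pairing them with the monomial rescaling will require a careful parameter-by-parameter argument. Case (ii) should reduce to a cleaner $r=s=0$ specialization of this analysis.
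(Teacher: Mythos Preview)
Your overall strategy matches the paper's: reduce to an invertible common eigenvector for $T_1,\ldots,T_{n-1}$ via \cite{chan2019bernstein}, derive and solve a functional equation for the $T_n$-action, then rescale by powers of $X_1\cdots X_n$ to land on a one-dimensional $\H_0$- or $\H_n$-module. Two points need correction.

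First, a minor one: the claim that the only solutions of the $P_i$ equation are the constants $-1,q^t$ is false. For instance $P_i=q^t\,X_{i+1}/X_i+(q^t-1)$ satisfies the quadratic/commutation constraint. What \cite[\S 2.2]{chan2019bernstein} actually gives---and what the paper uses---is an \emph{invertible} element $g_0\in\A$ (not $1$ in general) on which all of $T_1,\ldots,T_{n-1}$ act by a common scalar. So you should start from $g_0$, not from the naive generator.

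Second, and more substantively, your dichotomy ``$P_n$ constant $\Rightarrow$ $\H_0$-case; otherwise rescale to a $T_0$-eigenvector'' does not survive contact with the actual solution set. The functional equation for $f$ with $T_ng_0=fg_0$ (the paper's equation $(\ast)$) has, besides the two constants, six infinite families of non-constant solutions (Lemma~\ref{lem_sol}). For four of those families the correct move is to rescale $g_0$ by $(X_1\cdots X_n)^{\pm d}$ so that the \emph{$T_n$}-action becomes scalar, giving $\H\otimes_{\H_0}\epsilon_0$; no rescaling of that form produces a $T_0$-eigenvector in those cases. Only for the remaining two families does the rescaled vector satisfy $T_ng_1=(b\pm\sqrt{q}^{r\pm s}X_n^{-1})g_1$, and then a separate computation (the paper's Lemma~3.5, using $T_0=\sqrt{q}^{\,s+t(n-1)+r}X_1T_w^{-1}$) shows $g_1$ is a $T_0$-eigenvector, yielding $\H\otimes_{\H_n}\epsilon_n$. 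So the ``otherwise'' branch must itself bifurcate according to the shape of $f$, and the missing ingredient is precisely the full classification of solutions to $(\ast)$ together with the case-by-case choice of which endpoint ($T_n$ or $T_0$) to aim for.
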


\begin{proof}
We first restrict our attention to the subalgebra generated by $T_1,\dotsc, T_{n-1}$, which is contained in both $\H_0$ and $\H_n$. This is precisely the algebra $\H_{S_n}$ discussed in \cite{chan2019bernstein}. The possible $\H_{S_n}$-structures on $\A$ are determined in \S 2.2 there. To summarize the relevant results, there exists an invertible element $g_0\in \A$ on which the operators $T_1,\dots,T_n$ act by the same scalar, either $q^t$ or $-1$.

We now determine how $T_0$ and $T_n$ act on $g_0$. Since $g_0$ is invertible, we have $T_ng_0 = fg_0$ for some $f\in \A$. Recall that $T_n$ satisfies the quadratic relation
\[
T_n^2 = (q^r-1)T_n + q^r
\]
as well as the commutation relation
\[
T_nf- f^\vee T_n = \left((q^r-1) + \frac{1}{X_n}(\sqrt{q}^{r+s}-\sqrt{q}^{r-s})\right)\frac{f-f^\vee}{1-1/X_n^2}.
\]
Here, and throughout the proof, we let $r=s=0$ if we are considering Case (ii).
Recall that $f^\vee$ denotes the function $f^\vee(X_1,\dotsc,X_n) = f(X_1,\dotsc,X_{n-1},\frac{1}{X_n})$. Using the above and comparing the two sides of $T_n^2g_0 = (q^r-1)T_ng_0 + q^rg_0$, we get
\[
ff^\vee = (q^r-1)\frac{X_nf^\vee-\frac{1}{X_n}f}{X_n-\frac{1}{X_n}} - (\sqrt{q}^{r+s}-\sqrt{q}^{r-s})\frac{f-f^\vee}{X_n-\frac{1}{X_n}} + q^r.
\]
To simplify notation, we now set $b = q^r-1$, $c = (\sqrt{q}^{r+s}-\sqrt{q}^{r-s})$. We also temporarily drop the index $n$, writing $X$ instead of $X_n$. Clearing out the denominators, we rearrange the above equation into
\[
\label{eq_ff}\tag{\textasteriskcentered}
(X^2-1)ff^\vee = b(X^2f^\vee - f) - c(Xf- Xf^\vee) + q^r(X^2-1).
\]
Our first goal is to find the possible solutions $f\in \A$ of this equation.
\begin{lem}
\label{lem_sol}
The above equation has the following solutions:
\begin{align*}
f &= b + cX^{-1} + bX^{-2} + \dotsc + cX^{1-2d} +q^tX^{-2d}, && d \in \mathbb{Z}_{>0}\tag{i}\\
f &= b + cX^{-1} + bX^{-2} + \dotsc + cX^{1-2d} - X^{-2d}, && d \in \mathbb{Z}_{>0}\tag{ii}\\
f &= b + cX^{-1} + bX^{-2} + \dotsc + bX^{-2d} \pm \sqrt{q}^{r\pm s}X^{-2d-1}, && d \in \mathbb{Z}_{\geq 0}\tag{iii}\\
f &= \mp\sqrt{q}^{r\pm s}X^{2d+1}-bX^{2d} -cX^{2d-1}-\dotsb-cX, && d \in \mathbb{Z}_{\geq 0}\tag{iv}\\
f &= -q^rX^{2d} - cX^{2d-1} - \dotsc - bX^2-cX, && d \in \mathbb{Z}_{>0}\tag{v}\\ 
f &= X^{2d} - cX^{2d-1} - \dotsc - bX^2-cX, && d \in \mathbb{Z}_{>0}\tag{vi}.
\end{align*}
along with the constant solutions $f = q^t$ and $f=-1$.
\end{lem}
\begin{proof}
Each $f\in \A$ can be written as
\[
\label{eq_f}\tag{$\dagger$}
f = a_kX^k + a_{k-1}X^{k-1} + \dotsb + a_0 + a_{-1}X^{-1} + \dotsb + a_{-l}X^{-l}
\]
for some functions $a_{-l},\dotsc,a_k \in \mathbb{C}[X_1^\pm, \dotsc, X_{n-1}^\pm]$, with $a_k, a_{-l}\neq 0$. We write $\text{maxdeg}(f)$ for $k$ and $\text{mindeg}(f)$ for $-l$. Now let $f$ be a solution of \eqref{eq_ff}.
We begin our analysis of \eqref{eq_ff} by solving some special cases. We claim the following:
\begin{equation}
\label{eq_base}
\begin{matrix}
\text{If} & f =a_0, &\text{then}  & a_0 = q^r \text{ or } a_0 = -1.\\
\text{If} &  f =a_1X, &\text{then} & a_1 = \mp\sqrt{q}^{r\pm s}.\\
\text{If} & f = a_0 + a_{-1}X^{-1} \text{ and } a_{-1}\neq 0, &\text{ then } & a_0 = b \text{ and } a_{-1} = \pm\sqrt{q}^{r\pm s}.
\end{matrix}
\end{equation}
To verify this, we first look at solutions $f = a_0$. In this case the equation \eqref{eq_ff} reduces to $a_0^2 = ba_0 + q^r$. This equation has two constant solutions, $a_0 = -1$ and $a_0 = q^r$. These are also the only solutions---all the solutions are contained in the field of rational functions $\mathbb{C}(X_1,\dotsc,X_{n-1})$, and any quadratic equation has at most two solutions over a given field. When $f(X) = a_1X$, the equation becomes $a_1^2 + a_1c - q^r = 0$. Again, the only two solutions of this equation are the constant ones: $a_1 = \mp\sqrt{q}^{r\pm s}$. Finally, when $f = a_0 +a_{-1}X^{-1}$, the equation reduces to the following system:
\[
a_1b = a_1a_0\quad \text{and}\quad a_0^2 + a_{-1}^2 = a_0b + a_{-1}c + q^r.
\]
Since we are assuming that $a_1\neq 0$, the first equation gives us $a_0 = b$, and then the second becomes $a_{-1}^2 - ca_{-1} - q^r$. Again, we have two solutions: $a_{-1} = \pm \sqrt{q}^{r\pm s}$.

Next, when $f$ is a solution of \eqref{eq_ff} given by \eqref{eq_f}, we observe:
\begin{equation}
\label{eq_kl}
k \text{ and } l \text{ cannot both be positive.}
\end{equation} 
Indeed, let LHS and RHS denote the left-hand side and the right-hand side of \eqref{eq_ff}, respectively. We then have $\text{maxdeg}(LHS) = k+l+2$, whereas $\text{maxdeg}(RHS) \leq \max\{l+2,k+1,2\}$. Therefore equality of degrees cannot be achieved unless $k \leq 0$ or $l\leq 0$. In fact, the same argument gives us a slightly stronger statement in one case:
\begin{equation}
\label{eq_a0}
\text{If } k \gneqq 0  \text{ then } a_0 = 0.
\end{equation} 
Finally, we make use of the following fact, which is readily verified by direct computation:
\begin{equation}
\label{eq_symmetry}
\begin{gathered}
\text{For any positive integer } d, f \text{ is a solution of \eqref{eq_ff} if and only if }\\
 X^{2d}f-R_d \text{ is also a solution.}
\end{gathered}
\end{equation}
Here $R_d = \dfrac{bX^2+cX}{X^2-1}(X^{2d}-1) = bX^{2d} + cX^{2d-1}+\dotsc+bX^2+cX$.

\bigskip

We are now ready to find all the solutions. By \eqref{eq_kl}, any solution of $f$ either contains only positive powers of $X$, or only non-positive. We therefore  consider two separate cases:

\medskip

\noindent \textbf{Case A:} $f$ has only non-positive powers, i.e.~$f = a_0 + a_{-1}X^{-1} + \dotsb + a_{-l}X^{-l}$.

\medskip

Let $d = \lfloor l/2 \rfloor$. We use \eqref{eq_symmetry} and look at another solution, $g = X^{2d}f - R_d$.

We first assume $l=2d$ is even. In this case $g$ only has non-negative powers of $X$, but it has a non-zero constant term, $a_{-l}$. Therefore \eqref{eq_a0} shows that the coefficients next to the positive powers must be zero: $a_0-b = a_{-1}-c= \dotsb = a_{-l+1}-c = 0$. Now \eqref{eq_base} shows that there are only two possibilities for the constant term: $a_{-l} = q^t$ or $a_{-l} = -1$. We thus get two solutions:
\[
f = b + cX^{-1} + bX^{-2} + \dotsc + cX^{1-2d} +q^tX^{-2d} \text{ } \text{ and }\text{ } f = b + cX^{-1} + bX^{-2} + \dotsc + cX^{1-2d} - X^{-2d}.
\]
Next, assume that $l=2d+1$ is odd. Now $g$ has a non-zero coefficient (i.e.~$a_{-l}$) next to $X^{-1}$, so by \eqref{eq_kl} the coefficients next to positive powers must be equal to $0$. This gives us $a_0 = b, a_{-1} = c, \dotsc, a_{2-l} = c$. Furthermore, $g$ is thus of the form $a_{1-l} + a_{-l}X^{-1}$, so we can read off the coefficients $a_{1-l}$ and $a_{-l}$ from \eqref{eq_base}. We thus arrive at two more solutions:
\[
f = b + cX^{-1} + bX^{-2} + \dotsc + bX^{-2d} \pm \sqrt{q}^{r\pm s}X^{-2d-1}.
\]

\noindent \textbf{Case B:} $f$ only has positive powers, i.e.~$f = a_kX^k +\dotsc + a_1X$.

\medskip

This time, we set $d = \lfloor k/2 \rfloor$ and use \eqref{eq_symmetry} to obtain the solution $g = \dfrac{1}{X^{2d}}(f+R_d)$. 

First, assume that $k = 2d+1$ is odd. Then $g$ has a non-zero coefficient (i.e.~$a_k$) next to $X$, so \eqref{eq_kl} and \eqref{eq_a0} imply that all the lower coefficients are zero. This immediately gives us $a_1 = -c, a_2 = -b, \dotsc, a_{2d} = -b$. Furthermore, we have $g = a_kX$, so \eqref{eq_base} shows that we have two possibilities for $a_k$. We therefore get two solutions:
\[
f = \mp\sqrt{q}^{r\pm s}X^{2d+1}-bX^{2d} -cX^{2d-1}-\dotsb-cX.
\]
Finally, assume that $k=2d$ is even. First, if $k > 2$, consider another solution $g' = X^{2-2d}(f+R_{2d-2})$. Now $g'$ has a non-zero coefficient (i.e.~$a_k$) next to $X^2$, so the coefficient next to non-positive powers of $X$ have to be $0$ by \eqref{eq_kl}, \eqref{eq_a0}. This gives us $a_1 = -c, a_2 = -b, \dotsc, a_{2d-2}= -b$. In particular, this shows that $g = (a_k+b) + (a_{k-1}+c)X^{-1}$. Since $a_k + b \neq b$ (i.e.~$a_k\neq 0$), \eqref{eq_base} shows that we have only two possibilities:
\[
a_{k-1} +c = 0,\quad a_k+b \in \{q^r, -1\}.
\]
In other words, $a_{k-1} = -c$ and $a_k \in \{-q^r, 1\}$. We thus get the remaining solutions,
\[
f = -q^rX^{2d} - cX^{2d-1} - \dotsc - bX^2-cX \quad \text{and}\quad f = X^{2d} - cX^{2d-1} - \dotsc - bX^2-cX.
\]
\end{proof}

We continue the proof of Proposition \ref{prop_HstructureA}.
We have just proved that $T_ng_0 = fg_0$ where $f\in \A$ is one of the elements listed in Lemma \ref{lem_sol}. First, assume that $f$ is one of the constant solutions, i.e.~$f=-1$ or $f=q^r$. Then $g_0$ is an invertible element of $\A$ on which $T_1, \dotsc, T_{n-1}, T_n$ all act as scalars. In other words, we have a one-dimensional representation $\epsilon_0$ of the algebra $\H_0$. Since $\H = \A \otimes_{\mathbb{C}} \H_0$, it follows that the corresponding $\H$-module structure on $\A$ is isomorphic to
\[
\H\otimes_{ \H_0} \epsilon_0.
\]
Now, if $f$ is of type (i) or (ii) listed in the statement of Lemma \ref{lem_sol}, set \[
g_1 = (X_1X_2\cdot \dotsb \cdot X_n)^{-d}g_0.
\]
Since $(X_1X_2\cdot \dotsb \cdot X_n)^{-d}$ commutes with $T_1,\dotsc,T_{n-1}$, $g_1$ is still an eigenvector for each of these operators. We claim that $g_1$ is an eigenvector for $T_n$ as well. Indeed, using the appropriate commutation relation and the fact that $T_n$ commutes with $X_1, \dotsc, X_{n-1}$, we get
\begin{align*}
T_ng_1 &= (X_1X_2\cdot \dotsb \cdot X_{n-1})^{-d}\cdot T_nX_n^{-d}g_0\\
 &= (X_1X_2\cdot \dotsb \cdot X_{n-1})^{-d}\left(X_n^dT_n + \frac{bX_n+c}{X_n^2-1}(X_n^{-d}-X_n^d)\right)g_0\\
&= (X_1X_2\cdot \dotsb \cdot X_{n-1})^{-d}\left(X_n^df + \frac{bX_n+c}{X_n^2-1}(X_n^{-d}-X_n^d)\right)g_0\\
&= (X_1X_2\cdot \dotsb \cdot X_{n-1})^{-d}\left(X_n^{2d}f - \frac{bX_n+c}{X_n^2-1}(X_n^{2d}-1)\right)g_1.
\end{align*}
Simplifying the expression in the parentheses, we obtain $\lambda X_n^{-d}$, so that $T_ng_1 = \lambda g_1$, where $\lambda = q^t$, resp.~$-1$ when $f$ is of type (i), resp.~(ii).
We have thus once more found a common eigenvector for $T_1,\dotsc,T_{n-1},T_n$. Again, we deduce that the corresponding $\H$-module structure is isomorphic to $\H\otimes_{ \H_0} \epsilon_0$, where $\epsilon_0$ is a one-dimensional representation of $\H_0$.

When $f$ is of type (v) or (vi), we use the same argument and arrive at the same conclusion. The only difference in this case is that we have to set $g_1 = (X_1X_2\cdot \dotsb \cdot X_n)^{d}g_0$ in order to obtain a common eigenvector for $T_1,\dotsc,T_{n-1},T_n$.

In the remaining cases---that is, when $f$ is of type (iii) or (iv)---we cannot find such an eigenvector, but we claim that we can find an invertible $g_1 \in A$ which is a common eigenvector for $T_0, T_1, \dotsc, T_{n-1}$. Just like in the previous cases, this will imply that the $\H$-structure on $\A$ is isomorphic to $\H\otimes_{ \H_n} \epsilon_n$ for some one-dimensional representation $\epsilon_n$ of $\H_n$.

If $T_ng_0 = fg_0$ with $f$ of type (iii), we set $g_1 = (X_1X_2\cdot \dotsb \cdot X_n)^{-d}g_0$. If $f$ is of type (iv), let $g_1 = (X_1X_2\cdot \dotsb \cdot X_n)^{d+1}g_0$. In both cases, $g_1$ is an eigenvector for $T_1,\dotsc, T_{n-1}$ and a computation analogous to the one we carried out in for cases (i) and (ii) shows that we have
\[
T_ng_1 = (b\pm \sqrt{q}^{r\pm s}X_n^{-1})g_1.
\]
The following lemma then shows that $g_1$ is also an eigenvector $T_0$ and thus concudes the proof of Proposition \ref{prop_HstructureA}.

\begin{lem}
Let $g$ be an invertible element of $\A$ which is an eigenvector for $T_1, \dotsc, T_{n-1}$ and such that $T_ng = (b\pm \sqrt{q}^{r\pm s}X_n^{-1})g$.
Then $g$ is also an eigenvector for $T_0$.
\end{lem}
\begin{proof}
Recall that $T_0 = \sqrt{q}^{s+2(n-1)t+r}X_1T_w^{-1}$, with $T_w = T_1\dotsm T_{n-1}T_nT_{n-1}\dotsm T_1$. In both cases, all the operators $T_1,\dotsc,T_{n-1}$ act by the same scalar $\lambda \in \{-1,q^t\}$. We therefore have 
\[
T_0g =  \sqrt{q}^{s+2(n-1)t+r} \lambda^{-(n-1)} X_1 T_1^{-1}\cdot\dotsb\cdot T_{n-1}^{-1}T_n^{-1}g.
\]
We now recall that $T_n^{-1} = \frac{1}{q^r}(T_n-b)$; this follows from the quadratic relation for $T_n$. Therefore, by the assumption in the statement of the lemma, $T_n^{-1}g = \pm \sqrt{q}^{\pm s -r}X_n^{-1}$. Thus
\begin{equation}
\label{eq_T0}
T_0g = \mu\cdot \lambda^{-(n-1)} \cdot \sqrt{q}^{2(n-1)t}X_1 T_1^{-1}\cdot\dotsb\cdot T_{n-1}^{-1}X_n^{-1}g,
\end{equation}
with $\mu \in \{-1, q^s\}$. Finally, it remains to notice that for every $i = 1, \dotsc, n-1$ we have
\begin{equation}
\label{eq_com}
T_i^{-1}X_{i+1}^{-1} = \frac{1}{q^t}X_{i}^{-1}T_{i}.
\end{equation}
Indeed, from the quadratic relation we have $T_i^{-1}= \frac{1}{q^t}(T_i - (q^t-1))$. Combining this with the commutation relation for $T_i$, we get \eqref{eq_com}. Successively applying \eqref{eq_com} to \eqref{eq_T0} (and taking into account that each $T_i$ acts on $g$ by $\lambda$), we get
\[
T_0g = \mu g,
\]
which we needed to prove. Notice that the possible eigenvalues are precisely the zeros of $(x-q^s)(x+1) = 0$, the quadratic equation satisfied by $T_0$.
\end{proof}
The above lemma shows that in cases (iii) and (iv) we have an invertible element $g_1 \in \A$ which is a common eigenvector for $T_0,T_1,\dotsc, T_n$. Consequently, the $\H$-module structure on $\A$ is given by $\H\otimes_{ \H_n} \epsilon_n$ for some one-dimensional representation $\epsilon_n$ of $\H_n$. This concludes the proof of Proposition \ref{prop_HstructureA}.
\end{proof}

In view of Proposition \ref{prop_HstructureA}, there are eight candidates for the $\H$-structure (four, if $n=1$): First, we may take the tensor product over $\H_0$ or $\H_n$; after that, there are four $1$-dimensional representations of $\H_0$ (resp.~$\H_n$) to choose from. To verify this, note that the braid relations imply that---in any $1$-dimensional representation---the operators $T_1, \dotsc, T_{n-1}$ act by the same scalar, which has to be a zero of the quadratic relation satisfied by $T_i$: $
(x-q^t)(x+1) = 0$. We therefore have two possibilities for the action of the operators $T_i$, and two additional possibilities (again, the zeroes of the quadratic relation) for $T_n$ (resp.~$T_0$). For example, the $1$-dimensional representations of $\H_0$ are given by
\begin{align*}
\epsilon_{-1,-1} &: \{T_n \mapsto -1, T_i \mapsto -1\} \quad  &\epsilon_{q^r,-1} : \{T_n \mapsto q^r, T_i \mapsto -1\}\\
\epsilon_{-1,q^t} &: \{T_n \mapsto -1, T_i \mapsto q^t\} \quad  &\epsilon_{q^r,q^t} : \{T_n \mapsto q^r, T_i \mapsto q^t\}.
\end{align*}

\subsection{The Gelfand--Graev module}
\label{subs_GG}
To complete the analysis of the Gelfand--Graev representation, we need to determine which of the $\H$-module structures from the previous section is isomorphic to $\Pi = (\cind_U^G(\psi))_\mathfrak{s}$. We consider the cases (i)--(iii) separately.

\medskip

\noindent \textbf{Case (i).} Let $\delta$ be the unique irreducible subrepresentation of $\rho\nu^{\frac{n-1}{2}} \times \rho\nu^{\frac{n-3}{2}} \times \dotsb \times \rho\nu^{\frac{1-n}{2}}$. Then $\pi = \delta \rtimes \sigma$ is an irreducible generic representation. The corresponding $\H$-module is $1$-dimensional: by the Bernstein version of Frobenius reciprocity, we have
\begin{equation}
\label{eq_Frob_1}
\begin{aligned}
\Hom_G(\Gamma_\mathfrak{s}, \pi) =  \Hom_M(\cind_{M^\circ}^M(\rho \otimes \dotsb \otimes \rho \otimes \sigma), &\ \nu^{\frac{1-n}{2}}\rho \otimes \dotsb \otimes \nu^{\frac{n-1}{2}}\rho\otimes \sigma\\
\oplus  &\ \nu^{\frac{1-n}{2}}\rho^\vee \otimes \dotsb \otimes \nu^{\frac{n-1}{2}}\rho^\vee \otimes \sigma)
\end{aligned}
\end{equation}
Since $\rho^\vee$ is not an unramified twist of $\rho$ in this case, the above $\Hom$-space is only $1$-dimensional. By Proposition \ref{prop_structure_case_i}, $\Hom_G(\Gamma_\mathfrak{s}, \Pi)$ is isomorphic to either $\Pi \cong \H \otimes_{\H_{S_n}} \epsilon_{-1}$ or $\Pi \cong \H \otimes_{\H_{S_n}} \epsilon_{q^t}$. To determine which, we need only look at the action of $\H$ on the $1$-dimensional module $\pi$. We now need to examine the definition of the operators $T_i, i= 1,\dotsc, {n-1}$. In \cite{heiermann2011operateurs}, $T_i$ is defined in \S 5.2 by the formula 
\begin{equation}
\label{eq_T_i}
T_i = R_i + (q^t-1)\dfrac{X_i/X_{i+1}}{X_{i}/X_{i+1}-1}.
\end{equation}
The intertwining operator $R_i$ has a pole at $0$, and a zero at the point of reducibility---see \cite[\S 1.8]{heiermann2011operateurs}. Since $\nu^{\frac{3-n}{2}-i}\rho \times \nu^{\frac{3-n}{2}-i+1}\rho$ reduces, the operator $R_i$ acts by $0$ in this case. It therefore remains to determine the action of $X_i/X_{i+1}$. Equation \eqref{eq_Frob_1} shows that it suffices to determine the action of $X_i/X_{i+1}$ on
\[
\Hom_M(\cind_{M^\circ}^M(\rho \otimes \dotsb \otimes \rho \otimes \sigma),  \nu^{\frac{1-n}{2}}\rho \otimes \dotsb \otimes \nu^{\frac{n-1}{2}}\rho\otimes \sigma).
\]
Recalling the definition of $X_i$ (\S \ref{subs_structure}), we immediately see that $X_i/X_{i+1}$ acts by 
\[
\dfrac{(|\varpi|^{\frac{3-n}{2}-i})^t}{(|\varpi|^{\frac{3-n}{2}-i+1})^t} = \frac{q^{t(\frac{n-3}{2}+i)}}{q^{t(\frac{n-3}{2}+i-1)}} = q^t.
\]
This implies that $T_i$ also acts by $(q^t-1)\dfrac{q^t}{q^t-1} = q^t$. Since $\pi$ is a quotient of $\Pi$, we conclude that we must have $\Pi \cong \H \otimes_{\H_{S_n}} \epsilon_{q^t}$.

\bigskip

\noindent \textbf{Case (iii).} In this situation, the $\mathfrak{s}$-component of the Gelfand--Graev representation has two irreducible generic representations whose $\H$-module is one-dimensional. These are the two (generalized) Steinberg representations: $\pi$ and  $\pi'$, which are the unique irreducible subrepresentations of
\[
\nu^{\alpha+n-1} \times \dotsb \times \nu^{\alpha} \rtimes \sigma\quad  \text{and}\quad \nu^{\beta+n-1}\rho^- \times \dotsb \times \nu^{\beta}\rho^- \rtimes \sigma
\]
respectively. Recall that $\alpha$ (resp.~$\beta$) is the unique positive real number such that $\nu^\alpha\rho \rtimes \sigma$ (resp.~$\nu^\beta\rho^- \rtimes \sigma$) reduces (see \S\ref{subs_structure}). We now compare the action of the operators $T_0, \dotsc, T_n$ on these two representations---that is, on $\Hom_G(\Gamma_\mathfrak{s}, \pi)$ and $\Hom_G(\Gamma_\mathfrak{s}, \pi^-)$, where  $\Gamma_\mathfrak{s}$ is the projective generator defined in \S\ref{subs_Hecke}.

We start by analyzing the action on $\pi$. We first focus on $T_i, i = 1, \dotsc, n-1$. Again, $T_i$ is defined by \eqref{eq_T_i}, and once more the operator $R_i$ acts by $0$. By the Bernstein version of Frobenius reciprocity, we have
\[
\Hom_G(\Gamma_\mathfrak{s}, \pi) = \Hom_M(\cind_{M^\circ}^M(\rho \otimes \dotsb \otimes \rho \otimes \sigma), \nu^{-\alpha-{n+1}}\rho \otimes \dotsb \otimes \nu^{-\alpha}\rho\otimes \sigma).
\]
We immediately see that $X_i/X_{i+1}$ acts by 
\[
\dfrac{(|\varpi|^{-\alpha-n+i})^t}{(|\varpi|^{-\alpha-n+i+1})^t} = \frac{q^{t(\alpha+n-i)}}{q^{t(\alpha+n-i-1)}} = q^t.
\]
Again, this shows that $T_i$ acts by $(q^t-1)\dfrac{q^t}{q^t-1} = q^t$. For $T_n$ we have a similar formula:
\begin{equation}
\label{eq_T_n}
T_n = R_n + (q^r-1)\dfrac{X_n\left(X_n-\dfrac{q^{t\beta}-q^{t\alpha}}{q^r-1}\right)}{X_n^2-1}.
\end{equation}
Once more, $R_n$ acts by $0$, and $X_n$ acts by $(|\varpi|^{-\alpha})^t = q^{t\alpha}$. Recalling that $r=t(\alpha+\beta)$, we see that $T_n$ acts by $q^r$. Finally, since
\[
T_0 = \sqrt{q}^{r+2t(n-1)+s}X_1T_1^{-1}\cdot \dotsb \cdot T_{n-1}^{-1}T_n^{-1}T_{n-1}^{-1}\cdot \dotsb \cdot T_1^{-1},
\]
and since $X_1$ acts by $q^{(\alpha+n-1)t}$, we see that $T_0$ acts by $\dfrac{\sqrt{q}^{r+2t(n-1)+s}}{q^{2t(n-1)}\cdot q^r}q^{(\alpha+n-1)t} =q^s$.

We do the same with $\pi^-$. Again, $X_i/X_{i+1}$ acts by $q^t$ which shows that $T_i$ acts by $q^t$ as well. This time $X_n$ acts by $-q^{t\beta}$: recall that $\rho^{-} = \chi_0 \otimes \rho$ with $X_n(\chi_0) = -1$, so $X_n(\chi_0\nu^{-\beta})= -q^{t\beta}$. Repeating the above calculations we now see that $T_n$ acts by $q^r$, whereas $T_0$ acts by $-1$.

The above analysis allows us to single out the $\H$-module structure on $\Pi$. Since $T_0$ does not act by the same scalar on $\pi$ and $\pi^-$, we deduce that $\Pi = \H \otimes_{\H_0} \epsilon$ for some $1$-dimensional representation $\epsilon$ of $\H_0$. Now, since every $T_i$ ($i=1,\dotsc,n-1$) acts by $q^t$ and $T_n$ acts by $q^r$, we deduce that $\Pi = \H \otimes_{\H_0} \epsilon_{q^r,q^t}$ (see the end of \S \ref{subs_H_on_A} for notation).

\bigskip

\noindent \textbf{Case (ii)} The first part of our analysis remains the same as in Case (iii). The representation
\[
\nu^{n-1}\rho \times \nu^{n-2}\rho \times \dotsb \times \rho \rtimes \sigma 
\]
has two irreducible subrepresentations (both of which are in discrete series when $n>1$, and temepered when $n=1$), only one of which is generic. Denote the generic subrepresentation by $\pi$. Let $\pi^-$ denote the generic representation resulting from an analogous construction, when $\rho$ is replaced by $\rho^-$. Again, the $\H$-modules corresponding to $\pi$ and $\pi^-$ are $1$-dimensional,
and the same calculations we used in Case (iii) show that the operators $T_i$, $i = 1, \dotsc, n-1$ act by $q^t$. This leaves us four possible $\H$ structures to consider
\begin{equation}
\label{eq_case_ii_structures}
\begin{aligned}
\H \otimes_{\H_0} \epsilon_0, \quad \text{with} \quad &\epsilon_0(T_n) = \pm 1 \quad (\text{and }\epsilon_0(T_i) = q^t,i = 1, \dotsc, n-1); \quad \text{and}\\
\H \otimes_{\H_n} \epsilon_n, \quad \text{with} \quad &\epsilon_n(T_0) = \pm 1 \quad  (\text{and }\epsilon_n(T_i) = q^t, i = 1, \dotsc, n-1).
\end{aligned}
\end{equation}

So far, we have been able to view Case (ii) as a special instance of Case (iii) which occurs when $r=s=0$. However, to obtain an explicit description of the Gelfand--Graev module, we need more information than we used above in Case (iii).
The reason is that the standard intertwining operator $\chi\rho \rtimes \sigma \to \chi^{-1}\rho^\vee \rtimes \sigma$ no longer has a pole when $X_n(\chi) =\pm 1$. In Case (iii), the operator $R_n$ (see formula \eqref{eq_T_n})---which is constructed from the standard intertwining operator---vanishes at the point of reducibility, and the action of $T_n$ is determined by the action of the function
\[
(q^r-1)\dfrac{X_n\left(X_n-\dfrac{q^{t\beta}-q^{t\alpha}}{q^r-1}\right)}{X_n^2-1}
\]
used to remove the poles of $R_n$. In this case however, $R_n$ no longer vanishes and is regular at the point of reducibility; consequently, the above function does not appear in the construction and we have $T_n = R_n$. We know that this operator acts by $1$ or $-1$ on the $\H$-modules $\pi$ and $\pi^-$, but we still have a certain amount of freedom in our choices. Indeed, as one verifies easily, the operator $T_n' = (-1)^eX_n^fT_n$ (where $e \in \{0,1\}, f \in \mathbb{Z}$) satisfies the same relations as $T_n$. Therefore, we obtain the same Hecke algebra if we replace $T_n$ by $T_n'$, but the action of $T_n'$ on $\pi$ obviously differs from the action of $T_n$.

In fact, we know that $X_n$ acts on $\pi$ by $1$, and on $\pi^-$ by $-1$. Therefore $X_n^2$ acts by $1$ on both, so replacing $T_n$ by $X_n^2T_n$ does not affect our description of the Gelfand--Graev module. We thus have $4$ choices that affect the description ($e = 0$ or $1$; $f$ even or odd), and as we vary the four choices, the description of the Gelfand--Graev module varies through all four possibilities described in \eqref{eq_case_ii_structures} above.

This discussion shows that---to determine the action explicitly---we need to specify the choices appearing in the construction of the operator $R_n$. To do that, we make use of Whittaker models. In what follows, we specialize our discussion to the case $n=1$ to simplify notation (thus, the cuspidal representation which defines the component is $\rho \otimes \sigma$); the general case is completely analogous and follows from this one. We drop the subscripts and write $T,X$ instead of $T_n, X_n$. 

We fix a non-degenerate character $\psi$ of the unipotent radical $U$ of $G=\SO(2N+1,F)$. Let $V_\rho$ denote the space of the representation $\rho$, and let $\lambda$ be a $\psi$-Whittaker functional on $V_\rho$: $\lambda(\rho(u)v) = \psi(u)\lambda(v)$, for $v \in V_\rho$. Notice that $\lambda$ is then also a $\psi$-Whittaker functional for $\rho \otimes \chi$ for any unramified character $\chi \in \GL_k(F)$: we have
\[
\lambda((\chi\otimes \rho)(u)v) = \chi(u)\psi(u)\lambda(v) = \psi(u)\lambda(v),
\]
since $\det u = 1$ and thus $u \in \ker \chi$. Abusing notation, we also let $\lambda$ denote the $\psi$-Whittaker functional of $\rho \otimes \sigma$ (or $\chi\rho \otimes \sigma$ for any unramified $\chi$, as we have just shown). Following Proposition 3.1 of \cite{shahidi1981certain}, we now form a $\psi$-Whittaker functional $\Lambda_\chi$ on the space of $i_P^G(\chi\rho \otimes \sigma)$ by setting
\begin{equation}
\label{eq_heredity}
\Lambda_\chi(f) = \int_N \lambda\left(f(wn)\right)\psi(n)^{-1} dn,
\end{equation}
where $w$ is a representative of the non-trivial element of the Weyl group; in our case, we take $w$ to be the block anti-diagonal matrix
\[
\begin{pmatrix}
\mbox{ } & \mbox{ } & I_k\\
\mbox{ } & I_{2(N-K)+1} & \mbox{}\\
I_k & \mbox{ } & \mbox{}
\end{pmatrix}.
\]
Since $\pi$ and $\pi^-$ are generic, it suffices to determine the action of $T$ on their respective Whittaker functionals if we want to determine how $T$ acts on the $\H$-modules $\Hom_G(\Gamma_\mathfrak{s}, \pi)$ and $\Hom_G(\Gamma_\mathfrak{s}, \pi^-)$.

For any unramified character $\chi$, we have the specialization map $\text{sp}_\chi : \Gamma_\mathfrak{s} \mapsto i_P^G(\chi\rho\otimes \sigma)$ (cf.~\cite[\S 3.1]{heiermann2011operateurs}). The unique (up to scalar multiple) element of $\Hom_G(\Gamma_\mathfrak{s}, \pi)$ factors through $\text{sp}_1 : \Gamma_\mathfrak{s} \to i_P^G(\rho \otimes \sigma)$; similarly, any element of $\Hom_G(\Gamma_\mathfrak{s}, \pi^-)$ factors through $\text{sp}_{\chi_0}$ (recall that $\rho^- = \chi_0\otimes \rho$). Notice that $\Lambda_1$ and $\Lambda_{\chi_0}$ are the Whittaker models of $\pi$ and $\pi^-$, respectively.

To determine the action of $T$ on $\Lambda_\chi$ (for any $\chi$), we must compare $\Lambda_\chi\text{sp}_\chi$ and $\Lambda_\chi \circ \text{sp}_\chi \circ T$. The operator $T$ is defined by the following property:
\[
\text{sp}_\chi T = \varphi \circ J(\chi^{-1}) \circ \text{sp}_{\chi^{-1}}
\]
(cf.~\cite[\S 3.1,3.2]{heiermann2011operateurs}). Here $J(\chi^{-1})$ denotes the standard intertwining operator $i_P^G(\chi^{-1}\rho \otimes \sigma) \to i_P^G(\chi\rho^\vee \otimes \sigma)$. To explain $\varphi$, recall that $\rho$ is assumed to be self-dual. Therefore, we can fix an isomorphism $\varphi: \rho^\vee \mapsto \rho$ and induce to an isomorphism $i_P^G(\chi\rho^\vee \otimes \sigma) \to i_P^G(\chi\rho \otimes \sigma)$ for any unramified $\chi$, which we again denote by $\varphi$ by abuse of notation.

Let $\Lambda_\chi^\vee$ denote the Whittaker functional on $i_P^G(\chi\rho^\vee \otimes \sigma)$ obtained using \eqref{eq_heredity} from a fixed Whittaker functional $\lambda^\vee$ for $\rho^\vee$. By the uniqueness of Whittaker functionals, $\Lambda_\chi \circ \varphi = c\cdot \Lambda_\chi^\vee$ for some constant $c$. Furthermore, since $\varphi$ is induced from an isomorphism $\varphi: \rho^\vee \mapsto \rho$, it follows immediately that $c$ does not depend on $\chi$. Therefore, we have
\[
\Lambda_\chi  \circ \text{sp}_\chi \circ T = c\cdot \Lambda_\chi^\vee \circ J(\chi^{-1}) \circ \text{sp}_{\chi^{-1}}.
\]
Note that there is a natural way to normalize $\varphi$ in such a way that $c = 1$. We denote by $g^\tau$ the transpose of an element $g \in GL_k(F)$ with respect to the anti-diagonal (and with $g^{-\tau}$ its inverse). One can then define a new representation $\rho_1$ by $\rho_1(g) = \rho(g^{-\tau})$. This representation is isomorphic to the contragredient of $\rho$; the advantage is that it acts on $V_\rho$, the space of $\rho$. Furthermore, for any diagonal matrix (i.e.~an element of the maximal torus) $t \in GL_k(F)$, we may conjugate $\rho_1$ to get $\rho_2(g) = {}^t\rho_1(g) =  \rho_1(t^{-1}gt)$. Then $\rho_2 \cong \rho_1$, and with a suitable choice of $t$, $\rho_2$ becomes $\psi$-generic with the same Whittaker functional $\lambda$. For example, assume $\psi$ is given by
\[
\psi(u) = \psi_0(u_{1,2}+\dotsb+u_{k-1,k})
\]
where $\psi_0$ is a non-trivial additive character of $F$, and $u$ is an upper-triangular unipotent matrix with entries $u_{1,2},\dotsc,u_{k-1,k}$ above the main diagonal. Then one checks immediately that $t=\text{diag}(1,-1,\dotsc,(-1)^{k-1})$ gives
\[
\lambda(\rho_2(u)v) = \psi(u)\lambda(v)
\]
for any $v \in V_\rho$. In short, we may assume $
\Lambda_\chi  \circ \text{sp}_\chi \circ T = \Lambda_\chi^\vee \circ J(\chi^{-1}) \circ \text{sp}_{\chi^{-1}}$.

This leads to the second choice we have to make in the construction of $T$: that of the normalization of the intertwining operator $J$. Here we choose the standard normalization introduced by Shahidi; cf.~Theorem 3.1, \cite{shahidi1981certain}. Under this assumption, we have
\[
\Lambda_\chi^\vee \circ J(\chi^{-1}) = \Lambda_{\chi^{-1}}
\]
for every unramified character $\chi$. Thus
\[
\Lambda_\chi  \circ \text{sp}_\chi \circ T = \Lambda_{\chi^{-1}} \circ \text{sp}_{\chi^{-1}}.
\]
With this, we are ready to compare the action of $T$ on $\pi$ and $\pi^-$. 
For $\pi$ we specialize at $\chi = 1$; this gives us
\[
\Lambda_1 \circ \text{sp}_1 \circ T =\Lambda_1 \circ \text{sp}_1,
\]
i.e.~$T$ acts trivially.

For $\pi^-$ we specialize at $\chi_0$. We notice that $\chi_0^{-1} = \chi_0\eta$ for some character $\eta$ such that $\eta \circ \rho \cong \rho$. This shows that $\text{sp}_{\chi^{-1}} = \phi_\eta\circ \text{sp}_{\chi_0}$, where $\phi_\eta$ is the isomorphism $\rho \mapsto \eta \otimes \rho$ defined in \cite[\S 1.17]{heiermann2011operateurs} (again, we induce to $\phi_\eta : i_P^G(\rho \otimes \sigma) \to i_P^G(\eta\rho \otimes \sigma)$ and abuse the notation). Finally, using the uniqueness of Whittaker functionals again, we see that $\Lambda_{\chi\eta} \circ \phi_\eta = d\cdot \Lambda_{\chi}$ for some constant $d$ which does not depend on $\chi$. We can normalize $\phi_\eta$ so that $d=1$; then we have
\[
\Lambda_{\chi_0}  \circ \text{sp}_{\chi_0} \circ T = \Lambda_{\chi_0^{-1}} \circ \text{sp}_{\chi_0^{-1}} = \Lambda_{\chi_0\eta} \circ \phi_\eta\circ \text{sp}_{\chi_0}= \Lambda_{\chi_0} \circ \text{sp}_{\chi_0}.
\]
Therefore, $T$ acts trivially on $\pi^-$ as well.


To summarize, if we use Shahidi's normalization of the standard intertwining operator, and normalize $\varphi$ as we did above, it follows that $T$ acts trivially on both $\pi$ and $\pi^-$. This implies that the Gelfand--Graev module is isomorphic to 
\[
\H \otimes_{\H_0} \epsilon_0
\]
(see \eqref{eq_case_ii_structures}), where $\epsilon_0(T_n) = 1$. Note that this is analogous to our results in Case (iii), because $T_n$ again acts by $q^r$, only this time $r= 0$.

\bigskip

\noindent This completes our analysis of the structure of $\H$. We conclude the section by providing an alternative proof for the following result of \cite{bushnell2003generalized}:
\begin{cor}
We have
\[
\End_\H(\Pi) \cong Z(\H),
\]
the center of $\H$.
\end{cor}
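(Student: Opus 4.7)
The plan is to realize both $\End_\H(\Pi)$ and $Z(\H)$ as explicit subrings of the commutative algebra $\A$ and then show they coincide. Since the first proposition of Section 3 gives $\Pi \cong \A$ as an $\A$-module, we have $\End_\A(\Pi) \cong \A$ with elements acting by multiplication; the inclusion $\A \hookrightarrow \H$ then induces an embedding $\End_\H(\Pi) \hookrightarrow \End_\A(\Pi) \cong \A$, sending $\phi$ to $g_\phi := \phi(g_0)$, where $g_0 \in \Pi \cong \A$ is the distinguished generator produced in the proof of Proposition \ref{prop_HstructureA} (or Proposition \ref{prop_structure_case_i} in Case (i)). On the other hand, the Bernstein--Lusztig description of the center of an affine Hecke algebra (valid for unequal parameters) gives $Z(\H) = \A^W$ for the finite Weyl group $W$ of $\H$, namely $W = S_n$ in Case (i) and $W = W(C_n)$ in Cases (ii), (iii). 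Since central elements act on $\Pi$ by $\H$-endomorphisms, we obtain a chain of inclusions
\[
Z(\H) = \A^W \hookrightarrow \End_\H(\Pi) \hookrightarrow \A,
\]
reducing the corollary to showing that any $g \in \A$ yielding an $\H$-linear multiplication map is $W$-invariant.

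For this final step, I would verify $W$-invariance of $g$ reflection by reflection, using the commutation relations from \S\ref{subs_structure}. The $\H$-linearity of multiplication by $g$ is equivalent to $[T_i, g] \cdot g_0 = 0$ in $\Pi$ for each generator $T_i$ of $\H$ over $\A$. Using the Bernstein-Lusztig commutation relation $T_i g = g^{s_i} T_i + (q^t - 1)\frac{g - g^{s_i}}{1 - X_{i+1}/X_i}$ (for $i = 1, \dotsc, n-1$) together with $T_i g_0 = \epsilon(T_i) g_0$, a direct computation rearranges this condition into
\[
g_0 \cdot \frac{g^{s_i} - g}{1 - X_{i+1}/X_i} \cdot \bigl(\epsilon(T_i)(1 - X_{i+1}/X_i) - (q^t-1)\bigr) = 0,
\]
which is an equation in $\A$. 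Since $\A$ is a domain and the rightmost factor is easily checked to be a nonzero Laurent polynomial for either value $\epsilon(T_i) \in \{-1, q^t\}$, this forces $g^{s_i} = g$. The analogous computation for $T_n$ in Cases (ii) and (iii)---using the relation $fT_n - T_n f^\vee = (\dotsb) \frac{f - f^\vee}{1 - 1/X_n^2}$ from \S\ref{subs_structure}---forces $g = g^\vee$, i.e.~invariance under $s_n : X_n \mapsto X_n^{-1}$. Since $s_1, \dotsc, s_{n-1}$ generate $W = S_n$ in Case (i), and adjoining $s_n$ gives the standard generators of $W = W(C_n)$ in Cases (ii), (iii), we conclude $g \in \A^W = Z(\H)$ in all cases.

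The only genuine bookkeeping is verifying nonvanishing of the parenthesized factor above and handling the $T_n$ case separately (in Case (ii) the relation simplifies to $T_n f = f^\vee T_n$, making the computation immediate). The main conceptual ingredient, beyond the $\A$-module identification $\Pi \cong \A$ already established, is invoking $Z(\H) = \A^W$; once both are in hand, the remaining step is a mechanical commutator calculation.
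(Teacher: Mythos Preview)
Your proposal is correct and follows essentially the same route as the paper's proof: both identify $\End_\H(\Pi)$ inside $\End_\A(\Pi)\cong\A$, invoke $Z(\H)=\A^W$, and then use the commutation relations applied to the distinguished eigenvector $g_0$ (resp.\ $g$ in the paper) to force $W$-invariance reflection by reflection. The only cosmetic difference is that the paper fixes the specific eigenvalues $q^t,q^r$ coming from the determination $\Pi\cong\H\otimes_{\H_0}\epsilon_{q^r,q^t}$, whereas you allow either root of the quadratic relation; the nonvanishing check you flag is exactly the paper's observation that $q^r$ does not equal the rational function appearing on the right-hand side.
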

\begin{proof}
Obviously, $Z(H)$ is contained in $\End_\H(\Pi)$, so we need to prove that any element of $\End_\H(\Pi)$ is given by a multiplication with an element $f \in Z(\H)$. We prove the corollary in case (iii); the proof in cases (i) and (ii) is analogous.

We start by recalling that $Z(\H) = \A^W$, the Weyl group invariants of $\A$. Now let $f \in \End_\H(\Pi)$. We have $\End_\H(\Pi) \subseteq \End_\A(\Pi)$, but we know that $\Pi = \A$ as an $\A$-module. Therefore, $f \in  \End_\A(\A) = \A$. Thus, it remains to prove that $f$ is invariant under the action of the Weyl group.

It suffices to prove that $f$ is invariant under the set of simple reflections which generate the Weyl group. In other words, we need to prove that
\[
f^\vee = f \quad \text{and}\quad f^{s_i} = f, \quad i= 1,\dotsc,n-1,
\]
using the notation of \S \ref{subs_structure}. This follows immediately from what we now know about the structure of $\Pi$ as an $\H$-module: $\Pi = \H \otimes_{\H_0} \epsilon$. In other words, we have shown that there exists an element $g\in \A \cong \Pi$ (constructed in \S \ref{subs_H_on_A}) on which the elements $T_1,\dotsc, T_{n-1}$ and $T_n$ act by scalar multiplication with $q^t$, and $q^r$, respectively.

We now look at the commutation relation
\[
T_nf -  f^\vee T_n  = \left((q^r-1) + \frac{1}{X_n}(\sqrt{q}^{r+s}-\sqrt{q}^{r-s})\right)\frac{f - f^\vee}{1-1/X_n^2}
\]
satisfied by $T_n$ and $f$. Applying this to $g$ (recall that $T_ng = q^rg$), and using the fact that $f$ is in $\Hom_\H(\Pi)$ (so that $T_nfg = fT_ng$), we get
\[
(f-f^\vee)q^r\cdot g =  \left((q^r-1) + \frac{1}{X_n}(\sqrt{q}^{r+s}-\sqrt{q}^{r-s})\right)\frac{f - f^\vee}{1-1/X_n^2}\cdot g.
\]
This is an equality in $\A$. Since $q^r \neq \left((q^r-1) + \frac{1}{X_n}(\sqrt{q}^{r+s}-\sqrt{q}^{r-s})\right)\frac{1}{1-1/X_n^2}$ and $g\neq 0$, it follows that $f-f^\vee$ must be $0$. Therefore $f = f^\vee$. We get $f = f^{s_i}$ in the same way, using the commutation relations satisfied by the operators $T_i$. This proves the corollary.
\end{proof}

\begin{appendices}
\section{An equivalence of categories}
\label{sec_appA}

The goal of this section is to prove Theorem \ref{thm_A4} (see below). A partial proof of the result in question is given in Lemma 22 of \cite{bernsteindraft}; however, to the authors' knowledge, no complete proof exists in the literature, so we include it here.

Let $G$ be a reductive $p$-adic group, and $\text{Rep}(G)$ the category of smooth representations of $G$.  Let $P$ be a finitely generated object.  
Let $B=\mathrm{End}_G(P)$. Thus $P$ is a left $G$-module and a
left $B$-module.  For every smooth $G$-module $V$, 
\[ 
\mathfrak F(V)= \Hom_G(P, V) 
\] 
is naturally a right $B$-module, and for every right $B$-module $N$,  
\[ 
\mathfrak G(N) = N\otimes_B P 
\] 
is naturally a smooth $G$-module. Thus we have two functors.  Let $V$ be a smooth $G$-module, and $N$ a right $B$-module.  
Observe that $\Hom_G(V, P)$ is a left $B$-module. We have a map 
\[ 
N\otimes_B \Hom_G(V, P)  \rightarrow \Hom_G( V, N\otimes_B P) 
\] 
defined by $n\otimes t \mapsto T$ where $T(v)= n\otimes t(v)$ for all $v\in V$.  We have, see 2.7 in \cite{DI}: 

\begin{lem} If $V$ is a finitely generated and projective  $G$-module then 
\[ 
N\otimes_B \Hom_G(V, P)  \rightarrow \Hom_G( V, N\otimes_B P) 
\] 
is an isomorphism.  
\end{lem}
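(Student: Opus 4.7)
The plan is to fix $V$ (finitely generated and projective) and view both sides of the proposed map as covariant, additive functors of the right $B$-module $N$. Write $F_1(N) = N \otimes_B \Hom_G(V,P)$ and $F_2(N) = \Hom_G(V, N \otimes_B P)$, and let $\eta_N \colon F_1(N) \to F_2(N)$ denote the natural transformation in question. A direct check shows that $\eta$ is natural in $N$.

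The base case is $N = B$: both $F_1(B)$ and $F_2(B)$ canonically identify with $\Hom_G(V,P)$, and unwinding the definition shows $\eta_B$ is the identity, hence an isomorphism. Next I would show that both functors preserve arbitrary direct sums in $N$. For $F_1$ this is immediate from the fact that tensor product commutes with colimits. For $F_2$ one first uses that $-\otimes_B P$ commutes with direct sums, and then that $\Hom_G(V,-)$ commutes with direct sums because $V$ is finitely generated: any $G$-map from $V$ into a direct sum has finitely generated image and so factors through a finite sub-sum. Combined with naturality and the base case, this gives that $\eta_N$ is an isomorphism whenever $N$ is a free right $B$-module.

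To finish, I would choose a presentation $B^{(I)} \to B^{(J)} \to N \to 0$. Both $F_1$ and $F_2$ are right exact in $N$: $F_1$ by right exactness of the tensor product, and $F_2$ because $-\otimes_B P$ is right exact while $\Hom_G(V,-)$ is exact, the latter using projectivity of $V$. Applying $\eta$ produces a commutative ladder of right-exact sequences in which the first two vertical arrows are isomorphisms by the previous step; the standard four-lemma diagram chase then forces $\eta_N$ to be an isomorphism as well. The two hypotheses on $V$ thus play cleanly separated roles: finite generation lets us pass from $N = B$ to an arbitrary free $N$, and projectivity lets us descend from free $N$ to a general $N$. I do not anticipate any genuine obstacle; the argument is a routine dévissage, and the only things worth double-checking are that $\eta$ is truly natural in $N$ and that projectivity of $V$ (rather than any hypothesis on $P$) is what delivers the right exactness of $F_2$.
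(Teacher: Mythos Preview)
Your argument is correct. Both functors are right exact in $N$ (tensor for $F_1$; tensor followed by the exact $\Hom_G(V,-)$ for $F_2$), both preserve arbitrary direct sums in $N$ (finite generation of $V$ for $F_2$), and they agree at $N=B$, so the five-lemma d\'evissage goes through cleanly.

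The paper, however, takes the dual route: it fixes $N$ and reduces on the $V$ side. Because $V$ is finitely generated and projective in $\text{Rep}(G)$, it is a direct summand of $\cind_K^G(\mathbf 1)^r$ for some compact open $K$ and some $r$; both sides of the map are additive in $V$ and respect direct summands, so one is reduced to $V=\cind_K^G(\mathbf 1)$, where Frobenius reciprocity identifies each side with $N\otimes_B P^K$ (using that the $K$-invariants functor is given by an idempotent and hence commutes with $N\otimes_B(-)$). This is a two-line computation exploiting the concrete structure of smooth $G$-modules.

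Your approach is more abstract and more portable: it uses nothing about $p$-adic groups beyond the fact that finitely generated objects are small (so $\Hom$ out of them commutes with direct sums) and that projectives make $\Hom$ exact, so the same proof works verbatim for modules over any ring. The paper's approach is shorter and more explicit in this particular category, but yours isolates exactly which categorical features are doing the work. Either is perfectly acceptable here.
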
 

\begin{proof} Since $V$ is finitely generated, there exists an open compact subgroup $K$ and an integer $r$ such that 
$V$ is a quotient of $\cind_K^G (1)^r$, in fact, a summand since it is projective. Thus it suffices to prove the
lemma for $V=\cind_K^G (1)$. In this case both spaces are equal to $N\otimes_B P^K$. 
\end{proof} 

\begin{cor} Assume $P$ is finitely generated and projective. For every $B$-module $N$, the map 
\[ 
N \rightarrow \Hom_G(P, N\otimes_B P) = (\mathfrak F\circ \mathfrak G )(N) 
\] 
defined by $n\mapsto (p\mapsto n\otimes p)$ is an isomorphism. 
\end{cor}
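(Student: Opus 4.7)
The plan is to deduce the corollary as a direct specialization of the preceding lemma to the case $V = P$. Since $P$ is assumed to be finitely generated and projective, the lemma applies and yields an isomorphism
\[
N \otimes_B \Hom_G(P, P) \;\longrightarrow\; \Hom_G(P, N \otimes_B P).
\]
Now $\Hom_G(P, P) = B$ by definition, and the canonical right $B$-module isomorphism $N \otimes_B B \cong N$ given by $n \otimes b \mapsto nb$ identifies the left-hand side with $N$.

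The only remaining task is to verify that, under these identifications, the isomorphism of the lemma coincides with the map described in the statement, i.e.\ $n \mapsto (p \mapsto n \otimes p)$. This is a straightforward unwinding: the element $n \in N$ corresponds to $n \otimes \mathrm{id}_P \in N \otimes_B \Hom_G(P,P)$, and the lemma's map sends $n \otimes \mathrm{id}_P$ to the $G$-morphism $T : P \to N \otimes_B P$ defined by $T(p) = n \otimes \mathrm{id}_P(p) = n \otimes p$, which is exactly the map in the statement. This identifies the two maps and establishes the corollary. There is no substantive obstacle here; the only point requiring care is simply bookkeeping with the identifications $\Hom_G(P,P) = B$ and $N \otimes_B B = N$ so that the formula for the map matches.
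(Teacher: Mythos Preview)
Your proof is correct and follows exactly the paper's approach: the paper's proof consists of the single sentence ``Use $V=P$ in the lemma.'' You have simply unpacked this by making explicit the identifications $\Hom_G(P,P)=B$ and $N\otimes_B B\cong N$, and by checking that the resulting map matches the formula in the statement.
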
 
\begin{proof} Use $V=P$ in the lemma. 
\end{proof} 

\begin{defn} Let $\mathcal C$ be a categorical direct summand of $\text{Rep}(G)$. A finitely generated and  projective object $P$ in 
$\mathcal C$ is called a pro-generator if, for every object $V$ in $\mathcal C$, the natural map 
\[ 
\Hom_G(P, V) \otimes_{\mathbb C} P \rightarrow V 
\] 
is surjective.  
\end{defn} 

\begin{lem} Let $P$ is a pro-generator of $\mathcal C$. Let $B=\mathrm{End}_G(P)$.  For every object $V$ in $\mathcal C$ the map 
\[ 
(\mathfrak G\circ \mathfrak F )(V) = \Hom_G(P, V)\otimes_B P  \rightarrow V
\] 
defined by $t\otimes p\mapsto t(p)$ is an isomorphism.  
\end{lem}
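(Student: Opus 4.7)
I would organize the argument around the short exact sequence
\[
0 \to K \to \Hom_G(P,V) \otimes_B P \xrightarrow{\pi} V \to 0,
\]
where $\pi$ is the map in the lemma and $K$ is its kernel, and then show $K=0$.

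The surjectivity of $\pi$ is essentially for free. Since $P$ is a pro-generator, the composite
\[
\Hom_G(P,V) \otimes_{\mathbb C} P \twoheadrightarrow \Hom_G(P,V) \otimes_B P \xrightarrow{\pi} V
\]
is surjective by definition, so the second arrow is surjective as well. The whole content of the lemma is therefore the injectivity of $\pi$, i.e.\ showing $K=0$.

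To apply the pro-generator hypothesis to $K$, I first need $K$ to lie in the subcategory $\mathcal{C}$. The trick here is that $W := \Hom_G(P,V) \otimes_B P$ is itself a quotient of the $G$-module $\Hom_G(P,V) \otimes_{\mathbb C} P$, which is just a direct sum of copies of $P$ (indexed by a $\mathbb C$-basis) and hence lies in $\mathcal{C}$. Since $\mathcal{C}$ is a direct summand of $\text{Rep}(G)$, it is closed under subquotients, so $W \in \mathcal{C}$ and thus $K \in \mathcal{C}$.

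Now I apply $\mathfrak F = \Hom_G(P,-)$ to the short exact sequence. Projectivity of $P$ keeps it exact:
\[
0 \to \Hom_G(P,K) \to \Hom_G(P,W) \xrightarrow{\pi_*} \Hom_G(P,V) \to 0.
\]
By the preceding corollary (applied to $N = \Hom_G(P,V)$), the map $\alpha : \Hom_G(P,V) \to \Hom_G(P,W)$ sending $n \mapsto (p \mapsto n\otimes p)$ is an isomorphism. A direct unwinding shows $\pi_* \circ \alpha = \mathrm{id}$ on $\Hom_G(P,V)$, since $\pi(n\otimes p) = n(p)$. Hence $\pi_* = \alpha^{-1}$ is itself an isomorphism, forcing $\Hom_G(P,K)=0$. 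Combining this with $K\in \mathcal{C}$ and the pro-generator property, the surjection $\Hom_G(P,K)\otimes_{\mathbb C} P \twoheadrightarrow K$ shows $K=0$, which completes the proof.

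The only delicate point I anticipate is the verification that $K$ lies in $\mathcal{C}$; once that is in hand, everything else is a mechanical combination of the preceding corollary and projectivity of $P$.
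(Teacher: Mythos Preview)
Your argument is correct, and it takes a genuinely different route from the paper's.

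The paper proceeds by resolution: it first checks the statement for $V=P$, then for arbitrary direct sums $X\otimes_{\mathbb C}P$ (using that $P$ is finitely generated), and finally, for general $V$, it chooses a presentation $Y\otimes_{\mathbb C}P \to X\otimes_{\mathbb C}P \to V \to 0$, applies $\mathfrak F$ and then $-\otimes_B P$, and concludes by a five-lemma style diagram chase.

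Your approach is more direct: you get surjectivity for free from the pro-generator hypothesis, and for injectivity you feed the kernel sequence through $\Hom_G(P,-)$, invoke the preceding corollary once (with $N=\Hom_G(P,V)$) to see that $\pi_*$ is an isomorphism, and then kill $K$ by applying the pro-generator property a second time. This avoids both the resolution and the diagram chase; the price is the small extra verification that $K$ lies in $\mathcal C$, which you handle correctly via closure of a categorical direct summand under subquotients. Either argument is perfectly adequate here; yours is slightly slicker, while the paper's is the more classical pattern one sees in Morita-type equivalence proofs.
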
 
\begin{proof} 
This is trivial if $V=P$. Now assume that  $V$ is a direct sum of (infinitely many) copies of $P$.  Since $P$ is finitely generated, the lemma is also true for such $V$.  
In general, since $P$ is a pro-generator, we have an exact sequence 
\[ 
Y\otimes_{\mathbb C} P \rightarrow X\otimes_{\mathbb C} P \rightarrow V \rightarrow 0 
\] 
 where $X$ and $Y$ are vector spaces. Since $P$ is projective, 
 \[ 
\Hom_G(P, Y\otimes_{\mathbb C} P)  \rightarrow \Hom_G(P,X\otimes_{\mathbb C} P) \rightarrow \Hom_G(P,V )\rightarrow 0 
\]  
is exact. Since tensoring is right exact,  
\[ 
\Hom_G(P, Y\otimes_{\mathbb C} P) \otimes_B P  \rightarrow \Hom_G(P,X\otimes_{\mathbb C} P) \otimes_B P\rightarrow \Hom_G(P,V )\otimes_B P \rightarrow 0 
\] 
is exact. Since the first and the second term are respectively isomorphic to $Y\otimes_{\mathbb C} P$ and $X\otimes_{\mathbb C} P$, 
the lemma follows by a simple diagram chase. 
\end{proof} 

Combining the above, we have: 

\begin{thm}
\label{thm_A4}Let $\mathcal C$ be a categorical direct summand of $\textnormal{Rep}(G)$. Let $P$ be a pro-generator of $\mathcal C$. 
Then the functor 
\[ 
\mathfrak F(V)= \Hom_G(P, V),  
\] 
where $V$ is an object in $\mathcal C$, is an equivalence of $\mathcal C$ and the category of right ${\rm End}_G(P)$-modules. 
\end{thm}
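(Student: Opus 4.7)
The plan is to observe that the theorem is essentially a formal consequence of the two preceding results: the corollary of the first lemma, which gives a natural isomorphism $N \xrightarrow{\sim} (\mathfrak{F}\circ \mathfrak{G})(N)$ for every right $B$-module $N$, and the lemma preceding the theorem, which gives a natural isomorphism $(\mathfrak{G}\circ \mathfrak{F})(V) \xrightarrow{\sim} V$ for every object $V$ of $\mathcal{C}$. To deduce the equivalence, one only needs to check that these two maps are natural transformations and to package them as the unit and counit witnessing the equivalence.

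More precisely, I would first verify that $\mathfrak{F}$ really does land in right $B$-modules and $\mathfrak{G}$ really does land in $\mathcal{C}$. The former is clear from the definition. For the latter, note that $\mathfrak{G}(N) = N \otimes_B P$ is a quotient of a direct sum of copies of $P$, hence belongs to $\mathcal{C}$ because $\mathcal{C}$, being a categorical direct summand of $\textnormal{Rep}(G)$, is closed under quotients and direct sums and contains $P$. Both $\mathfrak{F}$ and $\mathfrak{G}$ are manifestly functorial in their respective inputs.

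Next, I would apply the two cited results. The corollary (with $V = P$ in the lemma preceding it) provides, for each $B$-module $N$, the morphism
\[
\eta_N: N \longrightarrow \Hom_G(P, N \otimes_B P) = (\mathfrak{F}\circ \mathfrak{G})(N), \qquad n\mapsto (p \mapsto n \otimes p),
\]
which is an isomorphism. The subsequent lemma provides, for each $V$ in $\mathcal{C}$, the morphism
\[
\varepsilon_V: (\mathfrak{G}\circ \mathfrak{F})(V) = \Hom_G(P,V)\otimes_B P \longrightarrow V, \qquad t\otimes p \mapsto t(p),
\]
which is also an isomorphism. Naturality of $\eta$ and $\varepsilon$ in $N$ and $V$ respectively is a routine diagram check: for $\eta$, a $B$-module map $N \to N'$ clearly commutes with the formula $n \mapsto (p \mapsto n\otimes p)$; for $\varepsilon$, a $G$-module map $V \to V'$ commutes with evaluation $t\otimes p \mapsto t(p)$.

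With these two natural isomorphisms in hand, the conclusion is immediate: $\mathfrak{F}$ and $\mathfrak{G}$ form an adjoint pair of functors whose unit and counit are isomorphisms, so $\mathfrak{F}$ is an equivalence of categories with quasi-inverse $\mathfrak{G}$. There is no real obstacle here; the content of the theorem is contained in the two preceding results, and the only remaining work is the (mechanical) verification of naturality, which is why this appears as the final assembly step rather than as a separate lemma.
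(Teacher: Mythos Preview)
Your proposal is correct and matches the paper's approach exactly: the paper's proof is simply the phrase ``Combining the above, we have,'' so all the content lies in the preceding corollary and lemma, and you have correctly identified that the theorem is assembled from those two natural isomorphisms. Your added checks (that $\mathfrak{G}$ lands in $\mathcal{C}$, and naturality of $\eta$ and $\varepsilon$) are routine details the paper leaves implicit.
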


\section{An isomorphism of projective generators}
\label{sec_appB}

Let\footnote{In this section, we freely use the notation introduced in the main body of the paper, particularly \S\ref{subs_Hecke}} $\mathfrak{s}$ be the inertial class of cuspidal data $(M,\sigma)$. To any such  $\mathfrak{s}$ Bushnell and Kutzko \cite{bushnell1998smooth} attach a type $(J,\lambda)$, where $J$ is a compact subgroup of $G$, and $\lambda$ is an irreducible representation of $J$. Then $\cind_J^G \lambda$ is a projective generator for $\text{Rep}_{\mathfrak{s}}(G)$. We are interested in the structure of the Hecke algebra $\mathcal{H}(G,\lambda)= \End_G(\cind_J^G \lambda)$. In some special cases, this Hecke algebra has already been described in \cite{miyauchi2014semisimple}.

In this section, we show that---for a suitable choice of type $(J,\lambda)$---the Hecke algebra $\mathcal{H}(G,\lambda)=\End_G(\cind_J^G \lambda)$ is isomorphic to the algebra $\H_\mathfrak{s} = \End_G(\Gamma_\mathfrak{s})$ constructed in Section \ref{subs_Hecke}. This follows from

\begin{thm}
\label{thm_B}
Assume that the residue characteristic of F is different from $2$. Let $\mathfrak{s}=[(M,\sigma)]$ be an inertial equivalence class in $G$. There exists an $\mathfrak{s}$-type $(J,\lambda)$ such that the generators $\Gamma_\mathfrak{s}$ and $\cind_J^G\lambda$ are isomorphic.
\end{thm}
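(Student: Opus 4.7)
The plan is to exhibit an $\mathfrak{s}$-type $(J,\lambda)$ coming from Stevens' construction, realized as a $G$-cover of an $M$-level type $(J_M,\lambda_M)$ for the cuspidal Bernstein component $\mathfrak{s}_M = [(M,\sigma)]_M$, and then match $\cind_J^G\lambda$ with $\Gamma_\mathfrak{s}$ by comparing the modules they represent under the equivalences of Theorem \ref{thm_A4}.

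First, I invoke Stevens' theory (for which the odd-residue-characteristic hypothesis is essential) to fix an $\mathfrak{s}_M$-type $(J_M,\lambda_M)$ together with a $G$-cover $(J,\lambda)$; the cover condition ensures that $(J,\lambda)$ is itself an $\mathfrak{s}$-type. By general type theory, both $\cind_{J_M}^M\lambda_M$ and $\cind_{M^\circ}^M\sigma_0$ are projective generators of $\text{Rep}_{\mathfrak{s}_M}(M)$, and the endomorphism algebra of each is the commutative torus algebra $\A = \mathbb{C}[M^\sigma/M^\circ]$ (for $\cind_{M^\circ}^M\sigma_0$ this is recalled in \S\ref{subs_unramified}; for $\cind_{J_M}^M\lambda_M$ it is the standard description of the Hecke algebra of a cuspidal type).

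Next, I compare these two $M$-objects. Theorem \ref{thm_A4} identifies $\text{Rep}_{\mathfrak{s}_M}(M)$ with the category of right $\A$-modules via $V\mapsto \Hom_M(\cind_{M^\circ}^M\sigma_0,V)$. Under this equivalence the object $\cind_{J_M}^M\lambda_M$ corresponds to some finitely generated projective right $\A$-module $P$ with $\End_\A(P)\cong \A$. Since $\A$ is a Laurent polynomial algebra over $\mathbb{C}$, Quillen--Suslin guarantees that $P$ is free, say $P\cong \A^n$, whence $\End_\A(P)\cong M_n(\A)$; matching this with $\A$ forces $n=1$. Thus $\cind_{J_M}^M\lambda_M\cong \cind_{M^\circ}^M\sigma_0$ as $M$-modules, and applying parabolic induction yields
\[
i_P^G\bigl(\cind_{J_M}^M\lambda_M\bigr) \cong i_P^G\bigl(\cind_{M^\circ}^M\sigma_0\bigr) = \Gamma_\mathfrak{s}.
\]

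Finally, I identify the left-hand side with $\cind_J^G\lambda$ via the Bushnell--Kutzko covers theorem: under the two type-equivalences $\text{Rep}_{\mathfrak{s}_M}(M)\simeq \mathcal{H}(M,\lambda_M)\text{-Mod}$ and $\text{Rep}_\mathfrak{s}(G)\simeq \mathcal{H}(G,\lambda)\text{-Mod}$, the functor $i_P^G$ corresponds to extension of scalars along the canonical inclusion $\mathcal{H}(M,\lambda_M)\hookrightarrow \mathcal{H}(G,\lambda)$. Applied to $V=\cind_{J_M}^M\lambda_M$, whose associated right $\mathcal{H}(M,\lambda_M)$-module is $\mathcal{H}(M,\lambda_M)$ itself, this produces $\mathcal{H}(G,\lambda)$, which is also the module associated to $\cind_J^G\lambda$. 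Hence $\cind_J^G\lambda\cong i_P^G(\cind_{J_M}^M\lambda_M)\cong \Gamma_\mathfrak{s}$. The main obstacle is this last step: the covers theorem requires $(J,\lambda)$ to satisfy delicate technical conditions (Iwahori decomposition compatible with $P=MN$, existence of a strongly positive element intertwining the Hecke-algebra embedding with parabolic induction), and one must check that Stevens' construction can be carried out so as to produce a cover with these properties; this is precisely where the assumption on the residue characteristic enters.
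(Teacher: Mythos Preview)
Your proof is correct and follows the same overall architecture as the paper's: invoke the Miyauchi--Stevens covers to get $(J,\lambda)$ over a cuspidal type $(J_M,\lambda_M)$, establish the isomorphism $\cind_{J_M}^M\lambda_M\cong\cind_{M^\circ}^M\sigma_0$ at the cuspidal level, then lift it to $G$ via $i_P^G$. The two proofs differ in how the cuspidal step and the lifting step are carried out.

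For the cuspidal step, the paper computes $\Hom_M(\cind_{J_M}^M\lambda_M,\sigma)$ directly by Mackey theory, using that the intertwining of a cuspidal type equals the normalizer $N(J_M)$; this gives a one-dimensional Hom and hence irreducibility of $\cind_{J_M}^{M^\circ}\lambda_M$. Your argument is more categorical: both objects are finitely generated projective generators, so under the equivalence with $\A$-modules, $\cind_{J_M}^M\lambda_M$ becomes a finitely generated projective $\A$-module $P$; Quillen--Suslin over the Laurent polynomial ring $\A$ forces $P\cong\A^n$, and commutativity of $\End_\A(P)\cong\mathcal H(M,\lambda_M)$ forces $n=1$. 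This is a genuinely different (and rather elegant) route; it trades the explicit intertwining input for the commutativity of the cuspidal Hecke algebra plus Quillen--Suslin. One small imprecision: you assert $\mathcal H(M,\lambda_M)\cong\A$ at the outset, but what your argument actually needs (and what is standard) is only that $\mathcal H(M,\lambda_M)$ is commutative; the isomorphism with $\A$ then falls out a posteriori from $n=1$.

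For the lifting step, the paper proves directly that $\cind_J^G\lambda\cong i_{\overline P}^G(\cind_{J_M}^M\lambda_M)$ by taking left adjoints of the functorial identity $V^\lambda\cong (r_N V)^{\lambda_M}$ coming from Bushnell--Kutzko's Theorem~7.9. You instead invoke the covers theorem in its ``extension of scalars'' form. These are essentially the same content packaged differently, so this difference is minor. Your closing worry about the technical hypotheses for covers is not really an obstacle: Miyauchi--Stevens already supply covers of cuspidal types with all required properties in odd residue characteristic, which is exactly what both proofs cite.
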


\begin{proof}
We use the theory of covers developed by Bushnell and Kutzko. Any inertial equivalence class $\mathfrak{s} = [(M,\sigma)]$ in $G$ also determines a (cuspidal) inertial equivalence class $\mathfrak{s}_M = [(M,\sigma)]$ in $M$. Let $(J,\lambda)$ be a type for $\mathfrak{s}$ and $(J_M,\lambda_M)$ a type for $\mathfrak{s}_M$. We say that the $(J,\lambda)$ is a cover of the type $(J_M,\lambda_M)$ if $J$ decomposes with respect to $M$ (in particular, $J_M = J \cap M$ and $\lambda_M = \lambda|_M$) and the equivalence of categories $\text{Rep}_{\mathfrak{s}}(G) \to \mathcal{H}(G,\lambda)$-Mod commutes with parabolic induction and the Jacquet functor in the appropriate sense (see Definition 8.1 and paragraph 5 of Introduction in \cite{bushnell1998smooth}). We then have the following.

\begin{lem}[Theorem 7.9 (iii) of \cite{bushnell1998smooth}]
\label{prop1}
Let $P$ be any parabolic subgroup with Levi factor $M$. For any smooth representation $V \in \textnormal{Rep}(G)$, the Jacquet functor with respect to $P$ induces an isomorphism
\[
V^\lambda = (V_N)^{\lambda_M}. 
\]
Here $V^\lambda$ denotes the $\lambda$-isotype of $V$, i.e.~the sum of all $G$-invariant subspaces of $V$ isomorphic to $\lambda$.
\end{lem}

We use this to reduce the proof of Theorem \ref{thm_B} to the case of cuspidal components.

\begin{lem}
Let $(J,\lambda)$ be a type for $\mathfrak{s} = [(M,\pi)]$ in $G$, and let $(J_M,\lambda_M)$ be a type for $\mathfrak{s}_M =  [(M,\pi)]$ in $M$. Assume that $(J,\lambda)$ is a cover of $(J_M,\lambda_M)$.

If the Bernstein generator $\Gamma_{\mathfrak{s}_M}$ is isomorphic to the Bushnell--Kutzko generator $\cind_{J_M}^M \lambda_M$ for the cuspidal component $\textnormal{Rep}_{\mathfrak{s}_M}(M)$, then we also have an isomorphism of generators for the component $\textnormal{Rep}_{\mathfrak{s}}(G)$.
\end{lem}

\begin{proof}
Lemma \ref{prop1} shows that we have
\[
\Res_{J_M}^J((\Res_{J}^G V )^\lambda) = (\Res_{J_M}^M r_N(V))^{\lambda_M}
\]
for any $G$-module $V$.
Here $\Res_H^G$ denotes the restriction functor from $G$ to $H$, and $r_N$ the Jacquet functor with respect to $P = MN$. In other words, we get the following isomorphism of functors $\text{Rep}(G) \to \text{Rep}(J_M)$:
\[
\label{eq_equi1}\tag{\textasteriskcentered}
\Res_{J_M}^J \circ (\lambda\text{-iso}) \circ \Res_{J}^G = (\lambda_M\text{-iso}) \circ \Res_{J_M}^M \circ r_N,
\]
where we have used $\lambda\text{-iso}$ (resp.~$\lambda_M\text{-iso}$) to denote taking the $\lambda$- (resp.~$\lambda_M$-) isotype.

All of the above functors have left adjoints:
\begin{itemize}[label=\textemdash]
\item $\cind_{J_M}^J$ and $\cind_{J_M}^M$ for $\Res_{J_M}^J$ and $\Res_{J_M}^M$, respectively;
\item $i_{\overline{P}}^G$ for $r_N$ (this is the Bernstein form of Frobenius reciprocity; here $\overline{P} = M\overline{N}$ is the parabolic subgroup opposite to $P$)
\item $\lambda$-iso and $\lambda_M$-iso are self-adjoint, because we are working with (necessarily semisimple) representations of compact groups $J$ and $J_M$.
\end{itemize}
Since adjoints are unique (up to equivalence), taking the adjoint of \eqref{eq_equi1} we get
\[
\cind_J^G\circ (\lambda\text{-iso}) \circ \cind_{J_M}^J = i_{\overline{P}}^G \circ \cind_{J_M}^M \circ (\lambda_M\text{-Iso}).
\]
We now apply both sides of the above equality to $\lambda_M$. On the right-hand side, we get $i_{\overline{P}}^G(\cind_{J_M}^M \lambda_M)$. By the assumptions from the statement of the lemma, we have $\cind_{J_M}^M \lambda_M =\Gamma_{s_M}$; therefore, $i_{\overline{P}}^G(\cind_{J_M}^M \lambda_M)$ is exactly the Bernstein generator $i_{\overline{P}}^G(\Gamma_{s_M}) = \Gamma_s$. Here we used the fact that the construction of the Bernstein generator does not depend on the choice of parabolic $P$ (we choose $\overline{P}$) with fixed Levi $M$ (cf.~\cite[Proposition 35]{bernsteindraft}).

On the left-hand side, we get $\cind_{J}^G((\cind_{J_M}^J \lambda_M)^{\lambda})$. However, Frobenius reciprocity gives us $\dim \Hom_J(\cind_{J_M}^J \lambda_M, \lambda) = \dim \Hom_{J_M}(\lambda_M, \lambda|_M) = 1$, which follows from $\lambda|_M = \lambda_M$. Therefore $(\cind_{J_M}^J \lambda_M)^{\lambda} = \lambda$, and the left-hand side becomes $\cind_{J}^G(\lambda)$, i.e.~the Bushnell--Kutzko generator. Thus
\[
\cind_{J}^G(\lambda) \cong \Gamma_\mathfrak{s},
\]
as claimed.
\end{proof}

The above lemma allows us to focus on cuspidal components of the form $\mathfrak{s}_M = [(M,\sigma)]$ in $M$. If we want to prove the isomorphism of generators in general, it remains to prove that the generators of the cuspidal components are isomorphic. In other words, we would like to show that 
\[
\cind_{M^\circ}^M \sigma_0 = \cind_{J_M}^M \lambda_M,
\]
where $\sigma_0$ is an (any) irreducible constituent of $\sigma|_{M^\circ}$. Notice that this is equivalent to $\cind_{J_M}^{M^\circ} \lambda_M$ being irreducible. Indeed, since $\lambda_M$ is a type, $\cind_{J_M}^M \lambda_M$ possesses $\sigma$ as a quotient; therefore (by Frobenius reciprocity), $\cind_{J_M}^{M^\circ} \lambda_M$ contains at least one irreducible summand of $\sigma|_{M^\circ}$. If it contained anything else, then $\cind_{J_M}^M \lambda_M$ would be strictly larger than $\cind_{M^\circ}^M \sigma_0$.

We know that $\sigma|_{M^\circ}$ is semisimple. It is not hard to see that, since $\lambda_M$ is a type, any irreducible subquotient of $\cind_{J_M}^{M^\circ} \lambda_M$ must appear in $\sigma|_{M^\circ}$. Therefore, all its irreducible subquotients are cuspidal, and it admits a central character. It follows that $\cind_{J_M}^{M^\circ} \lambda_M$ is semisimple as well. Thus, in order to prove the irreducibility of $\cind_{J_M}^{M^\circ} \lambda_M$, it suffices to show that
$
\dim \Hom_{M^\circ}(\cind_{J_M}^{M^\circ} \lambda_M,\sigma|_{M^\circ}) = 1
$, i.e. (using Frobenius reciprocity)
\[
\dim \Hom_{M}(\cind_{J_M}^{M} \lambda_M,\sigma) = 1.
\]
It turns out that we can achieve this when $G$ is the general linear group or a classical group over over $F$, under the assumption that $F$ has odd residue characteristic. By the results of Bushnell--Kutzko \cite{bushnell1998smooth} (for $\GL_n(F)$) and Miyauchi--Stevens \cite{miyauchi2014semisimple} (for classical groups), any component $\mathfrak{s}$ in $G$ admits a type $(J,\lambda)$ which is moreover a cover of a \emph{cuspidal} type $(J_M,\lambda_M)$ for $\mathfrak{s}_M$, in the sense of \cite[Definition 4.3]{miyauchi2014semisimple}. In that setting, we have the desired result (we drop the subscript $M$ to simplify notation):

\begin{lem}
Let $(J,\lambda)$ be a cuspidal type for a cuspidal component $\mathfrak{s} = [(M,\sigma)]$ in $M$. Then 
\[
\dim \Hom_{M}(\cind_{J}^{M} \lambda,\sigma) = 1.
\]
Consequently, the generators $\Gamma_\mathfrak{s}$ and $\cind_{J}^M\lambda$ are isomorphic in this case.
\end{lem}

\begin{proof}
The fact that $\lambda$ is a cuspidal type means that we can extend it to an irreducible representation $\Lambda$ of $N(J)$ (the normalizer of $J$ in $M$), with $\sigma = \cind_{N(J)}^M \Lambda$. Using Frobenius reciprocity and Mackey theory (provided by \cite[\S 5.5]{vigneras1996representations} in this setting), we get 
\begin{align*}
 \Hom_{M}(\cind_{J_M}^{M} \lambda,\pi) &\cong  \Hom_{M}(\cind_{J}^{M} \lambda,\cind_{N(J)}^M \Lambda)\\
 &\cong \Hom_{J}(\lambda,\bigoplus_x \cind_{J\cap {N(J)^x}}^J \text{Res}_{J\cap {N(J)^x}}^{N(J)^x} \Lambda^x)
\end{align*}
where the sum is taken over a set of double coset representatives in $J\backslash M /N(J)$. Fixing one such $x$, we see that
\[
\Hom_{J}(\lambda, \cind_{J\cap {N(J)^x}}^J \text{Res}_{J\cap {N(J)^x}}^{N(J)^x} \Lambda^x) \cong \Hom_{J\cap {N(J)^x}}(\lambda, \text{Res}_{J\cap {N(J)^x}}^{N(J)^x} \Lambda^x)
\]
(here we are using Frobenius reciprocity for a compact group, so that restriction is also a left adjoint for $\cind$). Conjugating everything on the right-hand side, we see that the above $\Hom$-space is isomorphic to
\[
\Hom_{J^x\cap N(J)}(\lambda^x, \text{Res}_{J^x\cap {N(J)}}^{N(J)} \Lambda) = \Hom_{J^x\cap J}(\lambda^x, \lambda).
\]
In the above equality we use the fact that $J^x \cap N(J) = J^x\cap J$: the group $J^x \cap N(J)$ is compact, so it is contained in the maximal compact subgroup of $N(J)$, i.e.~$J$.

Now since $\lambda$ is a cuspidal type, its intertwining is equal to the normalizer of $J$ in $G$, $N(J)$---this is Proposition 6.18 in \cite{stevens2008supercuspidal}. Therefore, $\Hom_{J^x\cap J}(\lambda^x, \lambda)$ is non-zero if and only if $x \in N(J)$, and in that case it is equal to $\Hom_{J}(\lambda, \lambda)$. In short, only the trivial coset contributes to the above sum of $\Hom$-spaces, and we have
\[
 \Hom_{M}(\cind_{J_M}^{M} \lambda,\pi) \cong \Hom_{J}(\lambda, \lambda) = \mathbb{C},
\]
which we needed to prove.
\end{proof}

\noindent This completes the proof of Theorem \ref{thm_B}.
\end{proof}

\end{appendices}

\bibliographystyle{hsiam}
\bibliography{bibliography}

\bigskip
\end{document}